\newtheorem{dummy}{anything}[section]
\newtheorem{theorem}[dummy]{Theorem}
\newtheorem*{theorem*}{Main Theorem}
\newtheorem{lemma}[dummy]{Lemma}
\newtheorem{proposition}[dummy]{Proposition}
\newtheorem{corollary}[dummy]{Corollary}
\theoremstyle{definition}
\newtheorem{definition}[dummy]{Definition}
\newtheorem{remark}[dummy]{Remark}
\def\:{\mkern 1.2mu \colon}
\newcommand{\mmatrix}[4]{\left (\vcenter
{\xymatrix@C-2pc@R-2pc{#1&#2\\#3&#4} } \right )}
\numberwithin{equation}{section} 
\begin{document}
\title[Generalized Chillingworth Classes On Subsurface Torelli Groups]
{Generalized Chillingworth Classes On Subsurface Torelli Groups}

\keywords{Torelli group, Johnson homomorphism, Chillingworth class.}

\author{Hat\.{I}ce \"{U}nl\"{U} Ero\u{G}lu}

\address{Department of Mathematics-Computer Science
\newline\indent
Necmettin Erbakan University
\newline\indent
Konya, Turkey}  
\email{hueroglu{@}erbakan.edu.tr}

\date{\today}

\begin{abstract}
The contraction of the image of the Johnson homomorphism is called the Chillingworth class. In this paper, we derive a combinatorial description of the Chillingworth class for Putman's subsurface Torelli groups. We also prove the naturality and uniqueness properties of the map whose image is the dual of the Chillingworth classes of the subsurface Torelli groups. Moreover, we relate the Chillingworth class of the subsurface Torelli group to the partitioned Johnson homomorphism.
		
\end{abstract}

\maketitle

\section{INTRODUCTION} 
The Torelli group of an oriented surface with genus $g$ and $n$ boundary components $\Sigma_{g,n}$, $\mathcal{I}(\Sigma_{g,n})$, 
is the normal subgroup of the mapping class group $\mathcal{M}(\Sigma_{g,n})$ 
of $\Sigma_{g, n}$ that acts trivially on $H_1(\Sigma_{g,n}; \mathbb{Z})$. 
In the study of the Torelli group, the Johnson homomorphism has an important role. The Johnson homomorphism determines the 
abelianization of $\mathcal{I}(\Sigma_{g,1})$ mod torsion \cite{Johnson2}. As no finite presentation for the Torelli group is known, the 
finiteness information inherent in the abelianization of the Torelli 
group is a fundamental tool.

The tensor contraction of the image of the 
Johnson homomorphism is the Chillingworth class. The Chillingworth class was first defined by Earle \cite{Earle} by using complex analytic methods. Johnson in \cite{Johnson} defined the Chillingworth 
class by considering Chillingworth's conjecture in \cite{Chillingworth2}. In \cite{Johnson}, Johnson 
called the homomorphism $t:\mathcal{I}(\Sigma_{g,1})\rightarrow H_1(\Sigma_{g,1}; \mathbb{Z})$ 
sending each $f\in \mathcal{I}(\Sigma_{g,1})$ to the 
Chillingworth class of $f$ the Chillingworth homomorphism. 

In \cite{Putman1}, Putman defined the subsurface Torelli groups in order to use inductive arguments 
in the Torelli group.
An embedding of a subsurface $\Sigma_{g, n}$ into a larger surface 
$\Sigma_{g'}$ gives a partition $\mathcal{P}$ of the boundary components 
of $\Sigma_{g, n}$ and this partition records which of 
the boundary components of $\Sigma_{g, n}$ become homologous in $\Sigma_{g'}$ \cite{Church}. 
Putman \cite{Putman1} defined the subsurface Torelli group $\mathcal{I}(\Sigma_{g, n}, \mathcal{P})$ 
by restricting $\mathcal{I}(\Sigma_{g'})$ to $\Sigma_{g, n}$.  
The subsurface Torelli groups $\mathcal{I}(\Sigma_{g, n}, \mathcal{P})$ 
restore functoriality and are therefore of central 
importance to the study of the Torelli group.


In this paper, we construct a combinatorial description of the Chillingworth 
class of the subsurface Torelli groups via winding numbers in the projective 
tangent bundle of $\Sigma_{g,n}$. Given the definition of Putman's subsurface Torelli groups, 
the difficulty in finding a combinatorial description via winding numbers is to 
make sense of the winding number of an arc with end points on the boundary of the 
subsurface. By defining a difference cocycle on the projective tangent bundle of 
the surface we are able to make sense of the winding number of the difference of two arcs. 
 
The rest of this paper is structured as follows:

In Section $2$, we review basic definitions related to the Torelli group and the subsurface Torelli groups.

In Section $3$, we construct a well-defined map 
$\widetilde{e}_X: \mathcal{I}(\Sigma_{g,n}, \mathcal{P})\rightarrow H_1^{\mathcal{P}}(\Sigma_{g,n}, \mathbb{Z})$ using the projective tangent bundle of $\Sigma_{g,n}$. 
Here, $X$ is a nonvanishing vector field on $\Sigma_{g,n}$ and 
$H_1^{\mathcal{P}}(\Sigma_{g,n}, \mathbb{Z})$ denotes the homology group 
defined by Putman \cite{Putman1}. We show that $\widetilde{e}_X$ is a 
homomorphism. We define a symplectic basis for the homology group 
$H_1^{\mathcal{P}}(\Sigma_{g,n}, \mathbb{Z})$ and call the dual of 
$\widetilde{e}_X(f)$ 
\textit{the Chillingworth class of $f$}. One reason for calling this dual 
the Chillingworth class, is that it is shown to factor through the partitioned 
Johnson homomorphism. Therefore, we obtain a combinatorial description of 
the Chillingworth class of the subsurface Torelli groups using the projective 
tangent bundle of $\Sigma_{g,n}$.

We use the Torelli category $\mathcal{T}\mathrm{Surf}$ defined by Church \cite{Church}, which 
is the refinement of the category $\mathrm{TSur}$ defined by Putman \cite{Putman1}. 
The Torelli group is a functor from $\mathcal{T}\mathrm{Surf}$ to the category of groups and 
homomorphisms \cite{Putman1}. For a morphism 
$i:(\Sigma_{g,n}, \mathcal{P})\rightarrow (\Sigma_{g',n'}, \mathcal{P}')$ of 
$\mathcal{T}\mathrm{Surf}$ and a nonvanishing vector field $X$ on $\Sigma_{g', n'}$, we prove the following:

\begin{theorem*}
There exists a homomorphism $i'_\ast$ such that the following diagram commutes:
\begin{equation}
 \xymatrix{
 \mathcal{I}(\Sigma_{g,n}, \mathcal{P}) \ar[r]^{i_\ast} \ar[d]_{\widetilde{e}_Y} &
 \mathcal{I}(\Sigma_{g',n'}, \mathcal{P}')\ar[d]^{\widetilde{e}_X} \\
 \textnormal{Hom} (H_1^{\mathcal{P}}(\Sigma_{g,n}; \mathbb{Z}), \mathbb{Z}) \ar[r]_{i'_\ast}                               & \textnormal{Hom} (H_1^{\mathcal{P}'}(\Sigma_{g', n'}; \mathbb{Z}), \mathbb{Z})} 
\label{eq:1000}
\end{equation}
Here $Y$ is the restriction of $X$ to $\Sigma_{g, n}$.
\end{theorem*}

We also prove that $\widetilde{e}_Y$ is unique in the sense that it is the 
only nontrivial homomorphism such that diagram~(\ref{eq:1000}) commutes. 
A commutative diagram for the 
Chillingworth homomorphism 
$t_{(\Sigma_{g,n}, \mathcal{P})}:\mathcal{I}(\Sigma_{g,n}, \mathcal{P})\rightarrow H_1^{\mathcal{P}}(\Sigma_{g,n}; \mathbb{Z})$ is also obtained.

\noindent{\bf Acknowledgements.}
I would like to thank to Ingrid Irmer and Mustafa Korkmaz for their supervision of this project. This project is part of my Ph.D thesis \cite{Hatice} at Middle East Technical University.


\section{Preliminaries}

In this section, we review some background knowledge and give some preliminary definitions that will be used throughout 
the paper. 

The \textit{mapping class group} $\mathcal{M}(\Sigma_{g,n})$ of 
$\Sigma_{g,n}$ is the group of isotopy 
classes of orientation-preserving diffeomorphisms of $\Sigma_{g,n}$ onto itself 
which fix the boundary components of $\Sigma_{g,n}$ pointwise. 

Throughout this paper, we will be working with representatives of 
mapping classes that fix a neighborhood of the boundary pointwise. 
We will use the notation $f\circ h$ or $fh$ to denote the composition of 
maps, where $h$ is assumed to be applied first.

The subgroup of $\mathcal{M}(\Sigma_{g,1})$ acting trivially on 
$H_1(\Sigma_{g,1};\mathbb{Z})$ is a normal subgroup of $\mathcal{M}(\Sigma_{g,1})$ 
and is called the \textit{Torelli group}. In other words, the Torelli group 
is the kernel of the symplectic representation 
$\rho: \mathcal{M}(\Sigma_{g,1}) \rightarrow \text{Sp}(2g, \mathbb{Z})$. 
It will be denoted by the symbol $\mathcal{I}(\Sigma_{g,1})$. 

\textbf{Winding Number:} If a surface $\Sigma_{g,n}$ has nonempty 
boundary, a nonvanishing vector field $X$ on $\Sigma_{g,n}$ exists. 
By choosing an appropriate parametrisation for a smooth closed curve, it 
can be assumed without loss of generality that the curve has a nonvanishing 
tangent vector at each point of the curve. 

Let us choose a Riemannian metric on $\Sigma_{g,n}$ with which we define a norm 
on $T_x\Sigma_{g,n}$, the tangent space to $\Sigma_{g,n}$ at $x\in\Sigma_{g,n}$, for each $x\in\Sigma_{g,n}$. 

Informally, given 
a nonvanishing vector field $X$, the \textit{winding number $w_X(\gamma)$ of a smooth closed 
oriented curve} $\gamma$ on a surface is defined as the number of rotations 
its tangent vector makes with respect to $X$ as $\gamma$ is traversed once in 
the positive direction \cite{Chillingworth}.

\textbf{The Chillingworth Class:} In \cite{Johnson}, Johnson defined the following homomorphism 
\[
e: \mathcal{I}(\Sigma_{g,1})\rightarrow H^1(\Sigma_{g,1}; \mathbb{Z})
\]
such that $e(f)([\gamma])= w_X(f\gamma)- w_X(\gamma)$.

For $f\in \mathcal{I}(\Sigma_{g,1})$, in Section $5$ of \cite{Johnson}, Johnson 
dualized the class $e(f)$ to a homology class $t(f)$ defined as follows: 
$[\gamma]\cdot t(f)=e(f)[\gamma]$ . The homology class $t(f)$ is called 
the \textit{Chillingworth class} of $f$. In \cite{Johnson}, Johnson proved that $C(\tau(f))= t(f)$ holds for any $f\in\mathcal{I}(\Sigma_{g,1})$, where $\tau$ is the Johnson homomorphism and $C$ is the tensor contraction map.

The Johnson homomorphism 
$\tau:\mathcal{I}(\Sigma_{g,1})\rightarrow \bigwedge^3 H_1(\Sigma_{g,1};\mathbb{Z})$ 
is a surjective homomorphism. 

The tensor contraction map 
$C:\bigwedge^3 H_1(\Sigma_{g,1}; \mathbb{Z})\rightarrow H_1(\Sigma_{g,1};\mathbb{Z})$
is defined as follows: 
\[
 C(x \wedge y \wedge z)= 2[(x \cdot y)z + (y \cdot z)x + (z \cdot x)y],
\]
where $\cdot$ denotes the intersection pairing of homology classes.

\subsection{Subsurface Torelli Groups}

A partitioned surface is a pair $(\Sigma, \mathcal{P})$ consisting 
of a surface $\Sigma$ and a partition $\mathcal{P}$ of the boundary components 
of $\Sigma$. Note that when the genus and the number of boundary components are not important, we use $\Sigma$ to denote the surface. Each element $P_k$ of $\mathcal{P}$ is called a block. 
If each element of the partition contains only 
one boundary component, it is called a \textit{totally separated surface} \cite{Church}. 

For a given embedding $i:\Sigma\hookrightarrow\Sigma_g$, let the connected 
components of $\Sigma_g\setminus\Sigma^{\circ}$ be $\{S_0, S_1, \ldots, S_m\}$ 
and let $P_k$ denote the set of boundary components of $S_k$ for each 
$k\in \{0,\ldots, m\}$. Here, $\Sigma^{\circ}$ denotes the interior of $\Sigma$. 
Consider the partition
\[
\mathcal{P}= \{ P_0, P_1,\ldots, P_m\}.
\] 
Then $i:\Sigma\hookrightarrow\Sigma_g$ is called a capping of $(\Sigma, \mathcal{P})$ (c.f. \cite{Putman1}).

For a partitioned surface $(\Sigma, \mathcal{P})$, in \cite{Putman1} Putman defined the subsurface Torelli group $\mathcal{I}(\Sigma, \mathcal{P})$ to be the subgroup 
$i_\ast^{-1}(\mathcal{I}(\Sigma_g))$ of $\mathcal{M}(\Sigma)$ for any 
capping $i:\Sigma\hookrightarrow\Sigma_g$.

In \cite{Putman1}, Section $3$, a special homology group 
$H_1^{\mathcal{P}}(\Sigma; \mathbb{Z})$ is defined on a partitioned surface 
$(\Sigma, \mathcal{P})$ such that $\mathcal{I}(\Sigma, \mathcal{P})$ 
is the kernel of $\mathcal{M}(\Sigma)\rightarrow \rm{Aut}(H_1^{\mathcal{P}}(\Sigma; \mathbb{Z}))$.

Consider a partition 
\[
\mathcal{P}= \{\{\partial_1^1,\ldots, \partial_{k_1}^1\},\ldots,\{\partial_1^m,\ldots, \partial_{k_m}^m\}\}.
\] 

Suppose the boundary components $\partial_i^j$ are oriented so that 
$\sum_{i,j}[\partial_i^j]=0$ in $H_1(\Sigma;\mathbb{Z})$. Define the 
homology group 
\[
\overline{H}_1^{\mathcal{P}}(\Sigma; \mathbb{Z}):=H_1(\Sigma; \mathbb{Z})/\partial H_1^{\mathcal{P}}(\Sigma; \mathbb{Z}),
\]
where  
\[
\partial H_1^{\mathcal{P}}(\Sigma; \mathbb{Z})=\left\langle ([\partial_1^1]+\ldots+[\partial_{k_1}^1]),\ldots,([\partial_1^m]+\ldots+[\partial_{k_m}^m])\right\rangle\subset H_1(\Sigma;\mathbb{Z}).
\]
  
\begin{definition}[\cite{Putman1}, Section $3.1$] 
Let $(\Sigma, \mathcal{P})$ be a partitioned surface, and let $\mathcal{Q}$ 
denote a set containing one point from each boundary component of $\Sigma$. 
The homology group $H_1^{\mathcal{P}}(\Sigma; \mathbb{Z})$ is defined to be 
the image of the following subgroup of $H_1(\Sigma, \mathcal{Q}; \mathbb{Z})$ 
in $H_1(\Sigma, \mathcal{Q}; \mathbb{Z})/\partial H_1^{\mathcal{P}}(\Sigma; \mathbb{Z})$:
\begin{align} \langle \{[h]\in H_1(\Sigma, \mathcal{Q}; \mathbb{Z}) |& \textrm{ $h$ is either a simple closed curve or a properly embedded arc $a$} \nonumber\\
                                                      & \textrm{ connecting two boundary curves in the same block of $\mathcal{P}$ and with} \nonumber\\
                                     & \textrm{ $\partial a\in \mathcal{Q}$}\}\rangle \nonumber
\end{align}

\end{definition}
 
One can easily see that $\mathcal{M}(\Sigma)$ acts on $H_1^{\mathcal{P}}(\Sigma; \mathbb{Z})$. 
In Theorem $3.3$ of \cite{Putman1}, Putman proves that 
the subsurface Torelli group $\mathcal{I}(\Sigma, \mathcal{P})$ is the 
subgroup of $\mathcal{M}(\Sigma)$ that acts trivially on 
$H_1^{\mathcal{P}}(\Sigma; \mathbb{Z})$.

A $\mathcal{P}$-separating curve on a partitioned surface 
$(\Sigma, \mathcal{P})$ is a simple closed curve $\gamma$ with $[\gamma]=0$ 
in $H_1^{\mathcal{P}}(\Sigma; \mathbb{Z})$. 
A twist about $\mathcal{P}$-bounding pair is defined 
to be $T_{\gamma_1}T_{\gamma_2}^{-1}$, where $\gamma_1$ and $\gamma_2$ are disjoint, 
nonisotopic simple closed curves and $[\gamma_1]=[\gamma_2]$ in 
$H_1^{\mathcal{P}}(\Sigma; \mathbb{Z})$. 
For $g\geq 1$, $\mathcal{I}(\Sigma_{g, n}, \mathcal{P})$ is generated 
by twists about $\mathcal{P}$-separating curves and twists about $\mathcal{P}$-bounding pairs \cite{Putman1}. 

A category $\mathrm{TSur}$ was defined in \cite{Putman1} such that $\mathcal{I}(\Sigma_{g, n}, \mathcal{P})$ is a functor from $\mathrm{TSur}$ to the category of groups and homomorphisms. The objects of $\mathrm{TSur}$ are 
partitioned surfaces $(\Sigma, \mathcal{P})$ and the morphisms from 
$(\Sigma_{g_1,n_1}, \mathcal{P}_1)$ to $(\Sigma_{g_2,n_2}, \mathcal{P}_2)$ 
are exactly those embeddings $i:\Sigma_{g_1,n_1}\hookrightarrow\Sigma_{g_2,n_2}$ 
which induce morphisms 
$i_\ast:\mathcal{I}(\Sigma_{g_1,n_1}, \mathcal{P}_1)\rightarrow \mathcal{I}(\Sigma_{g_2,n_2}, \mathcal{P}_2)$. The embeddings satisfy the following condition: for any 
$\mathcal{P}_1$-separating curve $\gamma$, the curve $i(\gamma)$ must be a 
$\mathcal{P}_2$-separating curve. In this paper, we will use the refinement 
of this category defined by Church in \cite{Church}.

Before giving the definition of the category defined by Church, we need to describe a construction of a minimal totally separated surface $\widehat{\Sigma}$ containing $\Sigma$.
  
\begin{remark}[\cite{Church}]
\label{rmk:totally}
Given a partitioned surface $(\Sigma, \mathcal{P})$, a minimal totally separated surface containing $\Sigma$ can be constructed as follows: For each $P\in \mathcal{P}$ with $|P|=n$, we attach a sphere 
with $n+1$ boundary components to the $n$ boundary components in $P$ of $\Sigma$ 
to obtain $\widehat{\Sigma}$ with a partition $\widehat{\mathcal{P}}$. Each element 
of the partition $\widehat{\mathcal{P}}$ contains only one boundary component. 
\end{remark}

\textbf{Notation:} Given a partitioned surface $(\Sigma, \mathcal{P})$, the partitioned surface 
$(\widehat{\Sigma}, \widehat{\mathcal{P}})$ will denote a minimal totally separated surface containing 
$\Sigma$.

Note that $H_1^{\widehat{\mathcal{P}}}(\widehat{\Sigma}; \mathbb{Z})$ is 
isomorphic to $H_1^P(\Sigma; \mathbb{Z})$.

\begin{definition}[\cite{Church}, Section $2.3$] 
\label{def:cat} 
Objects of the Torelli category $\mathcal{T}\mathrm{Surf}$ are 
partitioned surfaces $(\Sigma, \mathcal{P})$. A morphism from 
$(\Sigma_1, \mathcal{P}_1)$ to $(\Sigma_2, \mathcal{P}_2)$ is 
an embedding $i: \Sigma_1 \hookrightarrow \Sigma_2$ 
satisfying the following conditions:
\begin{itemize}
\item $i$ takes $\mathcal{P}_1$-separating 
curves to $\mathcal{P}_2$-separating curves.

\item $i$ extends to an embedding $\widehat{i}: \widehat{\Sigma}_1\hookrightarrow \widehat{\Sigma}_2$.
\end{itemize}

\end{definition}

In \cite{Church}, given a surface $(\Sigma, \mathcal{P})$ with 
$\mathcal{P}= \{P_0, P_1, \ldots, P_k\}$, Church defined the partitioned Johnson homomorphism $\tau_{(\Sigma, \mathcal{P})}$ 
with image $W_{(\Sigma, \mathcal{P})}$ given in Definition $5.8$ of \cite{Church}. The definition of the partitioned Johnson homomorphism is similar to the definition of the Johnson homomorphism.
Church stated in \cite{Church}, Definition $5.12$, that
$W_{(\Sigma, \mathcal{P})}$ can be considered to be a subspace of 
$\bigwedge^3 H_1^{\widehat{\mathcal{P}}}(\widehat{\Sigma}; \mathbb{Z})\oplus (\mathbb{Z}^k\otimes H_1^{\widehat{\mathcal{P}}}(\widehat{\Sigma}; \mathbb{Z}))$. 
Basis elements of $W_{(\Sigma, \mathcal{P})}$ 
is shown to be $a\wedge b\wedge c$ for the $\bigwedge^3 H_1^{\widehat{\mathcal{P}}}(\widehat{\Sigma}; \mathbb{Z})$ component and as $z_i\wedge x$ 
for $\mathbb{Z}^k\otimes H_1^{\widehat{\mathcal{P}}}(\widehat{\Sigma}; \mathbb{Z})$, where $a, b,c, x\in H_1^{\widehat{\mathcal{P}}}(\widehat{\Sigma}; \mathbb{Z})$ and $z_i$ is the boundary component of $\widehat{\Sigma}$ corresponding to $P_i\in\mathcal{P}$ for each $1\leq i\leq k $.


\section{Results}
In this section, we construct 
a well-defined map $\widetilde{e}_X$ by means of the projective tangent 
bundle. We prove that $\widetilde{e}_X$ and the homomorphism obtained 
by taking the dual of $\widetilde{e}_X(f)$ for any 
$f\in\mathcal{I}(\Sigma, \mathcal{P})$ 
satisfy the naturality property. We define the homomorphism 
from the subsurface Torelli groups to $H_1^{\mathcal{P}}(\Sigma; \mathbb{Z})$ obtained by taking dual of $\widetilde{e}_X(f)$ to be the Chillingworth homomorphism of the subsurface Torelli groups. Moreover, we show that $\widetilde{e}_X$ is 
the unique nontrivial homomorphism satisfying naturality. Finally, we relate 
the Chillingworth classes of the subsurface Torelli groups to the 
partitioned Johnson homomorphism defined by Church.

In this section, if $(\Sigma_1, \mathcal{P}_1)$ and $(\Sigma_2, \mathcal{P}_2)$ are 
partitioned surfaces, then by an embedding $i: (\Sigma_1, \mathcal{P}_1)\hookrightarrow (\Sigma_2, \mathcal{P}_2)$
of partitioned surfaces, we mean a morphism 
$i: (\Sigma_1, \mathcal{P})\rightarrow (\Sigma_2, \mathcal{P})$ of 
$\mathcal{T}\mathrm{Surf}$.


\subsection{Winding Number In The Projective Tangent Bundle}
\label{section:winding}
This section starts with the definition of the projective tangent bundle and we introduce the winding number in the projective tangent bundle.

Let $\Sigma$ be a smooth compact connected oriented surface 
with nonempty boundary. Let us choose a Riemannian metric on $\Sigma$. 
Let $UT(\Sigma)$ be the unit tangent bundle 
of $\Sigma$. Since $\Sigma$ has nonempty boundary, there are  
nonvanishing vector fields on $\Sigma$. Choice of two nonvanishing vector fields 
which are orthogonal to each other gives a parallelization of 
$\Sigma$. The unit tangent bundle $UT(\Sigma)$ is therefore homeomorphic 
to $\Sigma\times S^1$.

By using this unit tangent bundle, the projective tangent bundle $PT(\Sigma)$
is constructed 
as follows: By identifying antipodal points in each fiber $S^1$, we obtain 
a fiber bundle whose fiber is $\mathbb{RP}^1$, which is homeomorphic to $S^1$. 
The projective tangent bundle $PT(\Sigma)$
is also homeomorphic to the product $\Sigma\times S^1$ 
since $\Sigma$ has nonempty boundary. 

Let $\{[\alpha_i]\}_{i\in I}\cup \{[x_j],[y_j]\}_{j\in J}$ be a basis for 
$H_1^{\mathcal{P}}(\Sigma; \mathbb{Z})$. Here, $I$ and $J$ are finite index 
sets, each $\alpha_i$ is an arc, and each $x_j, y_j$ is a simple closed curve. 
We assume that all representatives are smooth. 

In this paper, we always take representatives of mapping classes that 
fix points in a regular neighborhood of each boundary component. Therefore,
$f(\alpha_i)$ and $\alpha_i$ have the same tangent vectors 
on a small neighborhood of the boundary components. We denote 
by $f(\alpha_i)\ast\alpha_i^{-1}$ the closed curve obtained 
by first traversing the arc $f(\alpha_i)$ then $\alpha_i$ with 
the reversed orientation. The resulting closed curve has two nondifferentiable 
points on the boundary of the subsurface. Since $f(\alpha_i)$ and $\alpha_i$ 
have the same tangent vectors at the end points, 
in the projective tangent bundle we can calculate the winding number of 
closed oriented curves having two such nondifferentiable points on the boundary. 
When we concatenate arcs to obtain a closed curve, we will assume that 
the tangent spaces of the arcs at the end points coincide.
 
The winding number in the projective tangent bundle is defined in analogy to 
the winding number in the tangent bundle.
We define winding number in the projective tangent bundle for smooth closed 
oriented curves or for closed oriented curves constructed by concatenating a 
pair of smooth arcs as just described. 

Let us denote the winding number in the projective tangent bundle of a closed oriented curve $\gamma$ 
with respect to a nonvanishing vector field $X$ by $\widetilde{w}_X(\gamma)$. Since 
$S^1$ is a double cover of $\mathbb{RP}^1$, for a smooth closed oriented curve $\gamma$ we have $w_X(\gamma)=\frac{\widetilde{w}_X(\gamma)}{2}$.

\subsection{Construction of $\mathbf{\widetilde{e}_X}$} 
In this section our aim is to define a well-defined map 
$\widetilde{e}_X:\mathcal{I}(\Sigma, \mathcal{P})\rightarrow \text{Hom} 
(H_1^{\mathcal{P}}(\Sigma; \mathbb{Z}), \mathbb{Z})$.

Let X be a nonvanishing vector field on 
a partitioned surface $(\Sigma, \mathcal{P})$ and $f$ be an element of the subsurface 
Torelli group of $(\Sigma, \mathcal{P})$. Choose a set of simple closed curves representing a basis of 
$H_1(\Sigma; \mathbb{Z})$. Assigning an integer to each basis element determines 
a homomorphism from $H_1(\Sigma; \mathbb{Z})$ to $\mathbb{Z}$. This integer is chosen 
to be the total number of times that $X$ rotates relative to $f^{-1}X$ as we traverse 
the basis element. This homomorphism, denoted by $d(X, f^{-1}X)$, is defined in 
\cite{Chillingworth}. By Lemma $4.1$ in \cite{Chillingworth}, we have 
\[
d(X, f^{-1}X)[\gamma] = w_X(f\gamma)- w_X(\gamma),
\] 
for any smooth closed oriented curve $\gamma$. In the projective tangent bundle we get 
\[
d(X, f^{-1}X)[\gamma]=\frac{\widetilde{w}_X(f\gamma)-\widetilde{w}_X(\gamma)}{2}.
\] 
Since $f$ fixes every boundary component of 
$\Sigma$, $d(X, f^{-1}X)[\partial]=0$ for any boundary component $\partial$. Therefore, 
$d(X, f^{-1}X)$ induces a homomorphism $\overline{d}(X, f^{-1}X): \overline{H}_1^{\mathcal{P}}
(\Sigma; \mathbb{Z})\rightarrow\mathbb{Z}$ defined by 
\[
\overline{d}
(X, f^{-1}X)[\gamma]=\frac{\widetilde{w}_X(f\gamma)-\widetilde{w}_X(\gamma)}{2}.
\] 

Now our aim is to get a well-defined map 
\[
\widetilde{d}(X, f^{-1}X): H_1^{\mathcal{P}}(\Sigma; \mathbb{Z})\rightarrow \mathbb{Z}
\]
mapping an element $[\alpha]$ of $H_1^{\mathcal{P}}(\Sigma; \mathbb{Z})$ to the half of 
the number of times that $X$ rotates relative to $f^{-1}X$ in the projective tangent bundle as we traverse $\alpha$. 


For a closed oriented curve $\gamma$, 
we define 
\[
\widetilde{d}(X, f^{-1}X)[\gamma]= \overline{d}(X, f^{-1}X)[\gamma].
\]
Now, let $h$ be a smooth oriented arc whose endpoints are on 
the boundary components of $\Sigma$ contained in the same element of $\mathcal{P}$ 
and let $f\in \mathcal{I}(\Sigma, \mathcal{P})$. Since $f$ fixes all points of 
a regular neighborhood of the boundary components, $h$ and $f(h)$ have the same tangent spaces at the end points and $f(h)\ast h^{-1}$ is a closed oriented curve with two cusps. 
We define 
\[
\widetilde{d}(X, f^{-1}X)[h]:= \frac{\widetilde{w}_X(f(h)\ast h^{-1})}{2}.
\] 

For each $P\in \mathcal{P}$ with $|P|=n$, let us 
attach a sphere with $n+1$ boundary components to the 
$n$ boundary components in $P$ of $\Sigma$ to obtain 
$\widehat{\Sigma}$ with a partition $\widehat{\mathcal{P}}$ as in Remark~\ref{rmk:totally}. 
Thus, $(\widehat{\Sigma},\widehat{\mathcal{P}})$ is totally separated.
Extend $X$ to the obtained larger surface $\widehat{\Sigma}$ so that it is again 
a nonvanishing vector field on $\widehat{\Sigma}$. For simplicity, the extension 
will also be denoted by $X$. Let $h_1$ be a smooth oriented arc in the complement of $\Sigma$ 
whose end points are $\partial h$. Let $\gamma:=h\ast h_1$ 
denote the smooth closed oriented curve obtained by concatenating $h$ and $h_1$. Notice that 
we choose a consistent orientation for $h_1$ to get a closed oriented curve $\gamma$. 
We parametrize $\gamma$ such that its initial and terminal points are on one of the 
boundary components of the subsurface $\Sigma$. Then $f\gamma$ is isotopic to $f(h)\ast h_1$.

\begin{remark}
\label{rmk:note}
The winding number in the projective tangent bundle of the 
concatenation of smooth closed oriented curves 
is equal to the sum of the winding numbers 
of each smooth closed oriented curve if the tangent spaces of the curves at the end points 
are the same. Therefore, we obtain the following equalities:
\begin{eqnarray}
\frac{\widetilde{w}_X(f\gamma)-\widetilde{w}_X(\gamma)}{2} 
		& = & \frac{\widetilde{w}_X(f\gamma\ast\gamma^{-1})}{2} \nonumber \\                              
		& = &  \frac{\widetilde{w}_X(f(h)\ast h_1\ast(h\ast h_1)^{-1})}{2}\nonumber \\                            
		& = &  \frac{\widetilde{w}_X(f(h)\ast h^{-1})}{2}.\nonumber\end{eqnarray}
\end{remark}
One can easily observe that the obtained equality 
\[ 
\frac{\widetilde{w}_X(f\gamma)-\widetilde{w}_X(\gamma)}{2}= \frac{\widetilde{w}_X(f(h)\ast h^{-1})}{2}
\]
does not depend on the choice of the arc representative $h_1$ on $\widehat{\Sigma}\setminus\Sigma^{\circ}$.

\begin{lemma} 
Let $h$ be a smooth oriented arc representing a homology class $[h]$ in 
$H_1^{\mathcal{P}}(\Sigma; \mathbb{Z})$. Then the number 
$\frac{\widetilde{w}_X(f(h)\ast h^{-1})}{2}$ is independent of 
the choice of the representative of $[h]$.
\end{lemma}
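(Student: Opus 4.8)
The plan is to reduce the independence statement to the already-established fact (the displayed equality before the Lemma, together with the Remark that it is independent of the choice of the auxiliary arc $h_1$ in the complement) that $\frac{\widetilde{w}_X(f(h)\ast h^{-1})}{2} = \overline{d}(X,f^{-1}X)[\gamma]$, where $\gamma = h\ast h_1$ is a closed curve in $\widehat{\Sigma}$. Since $\overline{d}(X,f^{-1}X)$ is a genuine homomorphism on $\overline{H}_1^{\mathcal{P}}(\widehat{\Sigma};\mathbb{Z}) \cong \overline{H}_1^{\mathcal{P}}(\Sigma;\mathbb{Z})$, it depends only on the homology class of $\gamma$ there. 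So the whole problem becomes: if $h$ and $h'$ are two smooth oriented arcs in $\Sigma$ with $[h] = [h']$ in $H_1^{\mathcal{P}}(\Sigma;\mathbb{Z})$, and $h_1$, $h_1'$ are corresponding complementary-arc completions, then $[h\ast h_1] = [h'\ast h_1']$ in $\overline{H}_1^{\mathcal{P}}(\widehat{\Sigma};\mathbb{Z})$.

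First I would recall how closing up arcs interacts with Putman's homology group: the natural map sending a relative class $[h]\in H_1(\Sigma,\mathcal{Q};\mathbb{Z})$ represented by an arc to the closed-up class $[h\ast h_1]$ is exactly the composite defining $H_1^{\mathcal{P}}(\widehat{\Sigma};\mathbb{Z})$ — indeed, attaching the spheres in Remark~\ref{rmk:totally} and closing the arc off in the attached sphere is precisely the construction under which $H_1^{\mathcal{P}}(\Sigma;\mathbb{Z}) \cong H_1^{\widehat{\mathcal{P}}}(\widehat{\Sigma};\mathbb{Z})$. Thus if $[h]=[h']$ in $H_1^{\mathcal{P}}(\Sigma;\mathbb{Z})$, their images agree in $H_1^{\widehat{\mathcal{P}}}(\widehat{\Sigma};\mathbb{Z})$, and a fortiori in the quotient $\overline{H}_1^{\widehat{\mathcal{P}}}(\widehat{\Sigma};\mathbb{Z})$ on which $\overline{d}$ is defined. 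Feeding this into the displayed equality gives $\frac{\widetilde{w}_X(f(h)\ast h^{-1})}{2} = \frac{\widetilde{w}_X(f(h')\ast h'^{-1})}{2}$, which is the claim.

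To make this airtight I would spell out two subordinate points. First, one must check that when $h$ is replaced by an arc $h'$ differing by something dying in $H_1^{\mathcal{P}}(\Sigma;\mathbb{Z})$ — e.g. a boundary-parallel loop summand, or a change of endpoint on a boundary component, or the relation $[\partial^1_1]+\cdots+[\partial^1_{k_1}] = 0$ coming from $\partial H_1^{\mathcal{P}}$ — the corresponding closed curve $\gamma' = h'\ast h_1'$ can genuinely be chosen homologous to $\gamma$ in $\widehat{\Sigma}$ rel nothing; this is where one uses that in $\widehat{\Sigma}$ the boundary components in a common block become homologous after capping, so the ambiguity in choosing $h_1$ versus $h_1'$ is absorbed, matching the Remark. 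Second, since $\overline{d}(X,f^{-1}X)$ is only defined on $\overline{H}_1^{\mathcal{P}}$ rather than on $H_1^{\mathcal{P}}$, I should note that the value $\frac{\widetilde{w}_X(f\gamma)-\widetilde{w}_X(\gamma)}{2}$ is unchanged when $[\gamma]$ is modified by an element of $\partial H_1^{\mathcal{P}}$ — which is exactly the earlier observation that $d(X,f^{-1}X)[\partial] = 0$ for each boundary component, since $f$ fixes a neighborhood of $\partial\Sigma$. The main obstacle, I expect, is purely bookkeeping: verifying that the closing-up operation $h\mapsto h\ast h_1$ really does descend to a well-defined map on homology classes compatible with the isomorphism $H_1^{\mathcal{P}}(\Sigma;\mathbb{Z})\cong H_1^{\widehat{\mathcal{P}}}(\widehat{\Sigma};\mathbb{Z})$ and not merely on individual arcs; once that compatibility is in hand the winding-number independence is immediate from the homomorphism property of $\overline{d}$.
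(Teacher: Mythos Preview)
Your proposal is correct and follows essentially the same route as the paper: close up $h$ and $h'$ to closed curves in $\widehat{\Sigma}$ via complementary arcs, argue that $[h]=[h']$ in $H_1^{\mathcal{P}}(\Sigma;\mathbb{Z})$ forces the closed-up curves to be homologous in $H_1^{\widehat{\mathcal{P}}}(\widehat{\Sigma};\mathbb{Z})$, and then invoke that $\overline{d}(X,f^{-1}X)$ depends only on homology class. The only cosmetic difference is that the paper justifies the middle step by citing the morphism property of Definition~\ref{def:cat} (that $\mathcal{P}$-separating curves map to $\widehat{\mathcal{P}}$-separating curves, applied to $h\ast h'^{-1}$), whereas you cite the isomorphism $H_1^{\mathcal{P}}(\Sigma;\mathbb{Z})\cong H_1^{\widehat{\mathcal{P}}}(\widehat{\Sigma};\mathbb{Z})$ directly; these are equivalent inputs already established in the paper.
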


\begin{proof}
Let $[h]=[h']$ be in $H_1^{\mathcal{P}}(\Sigma; \mathbb{Z})$. 
Then we have $[h\ast h'^{-1}]= 0$ in $H_1^{\mathcal{P}}(\Sigma; \mathbb{Z})$. 
Since the embedding $(\Sigma, \mathcal{P})\hookrightarrow (\widehat{\Sigma}, \widehat{\mathcal{P}})$ of partitioned surfaces takes $\mathcal{P}$-separating curves to $\widehat{\mathcal{P}}$-separating curves by the first condition 
of Definition~\ref{def:cat}, we get $[h\ast h'^{-1}]= 0$ in $H_1^{\widehat{\mathcal{P}}}
(\widehat{\Sigma}; \mathbb{Z})$. We have 
$[\gamma]=[\gamma']$ by using the following equalities:
\[
[h\ast h'^{-1}]= [h\ast h_1\ast h_1^{-1}\ast h'^{-1}]
= [h\ast h_1]- [h'\ast h_1]=0,
\] 
where $[\gamma]=[h\ast h_1]$ and $[\gamma']=[h'\ast h_1]$. 

Since we have 
\[
\frac{\widetilde{w}_X(f\gamma)-\widetilde{w}_X(\gamma)}{2}
=\frac{\widetilde{w}_X(f\gamma')-\widetilde{w}_X(\gamma')}{2},
\] 
for any smooth homologous simple closed curves $\gamma$ and $\gamma'$ in 
$H_1^{\widehat{\mathcal{P}}}(\widehat{\Sigma}; \mathbb{Z})$, we get 
\[
\frac{\widetilde{w}_X(f(h)\ast h^{-1})}{2}=\frac{\widetilde{w}_X(f(h')\ast h'^{-1})}{2}.
\] 
\end{proof}

\begin{lemma}
The map $\widetilde{d}(X, f^{-1}X): H_1^{\mathcal{P}}(\Sigma, \mathbb{Z})\rightarrow \mathbb{Z}$ is a homomorphism.
\end{lemma}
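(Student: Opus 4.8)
The plan is to show that $\widetilde{d}(X, f^{-1}X)$ respects addition on the generators of $H_1^{\mathcal{P}}(\Sigma; \mathbb{Z})$, namely simple closed curves and properly embedded arcs joining boundary components in the same block. The key observation is that $\widetilde{d}(X, f^{-1}X)$ is defined uniformly: for any class $[c]$, once we pick a representative $c$ that is either a smooth closed curve or a concatenation $h \ast h_1$ (arc plus a fixed arc $h_1$ in the complement closing it up in $\widehat\Sigma$), we have
\[
\widetilde{d}(X, f^{-1}X)[c] = \frac{\widetilde{w}_X(fc) - \widetilde{w}_X(c)}{2},
\]
by the definition for closed curves together with Remark~\ref{rmk:note} for the arc case. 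So the whole problem reduces to additivity of the function $[c] \mapsto \frac{\widetilde{w}_X(fc) - \widetilde{w}_X(c)}{2}$ on homology classes of closed curves in $\widehat\Sigma$, where additivity is already available since this agrees with $\overline d(X,f^{-1}X)$ and hence is a homomorphism on $\overline{H}_1^{\widehat{\mathcal{P}}}(\widehat\Sigma;\mathbb Z)$.

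First I would fix classes $[c_1], [c_2] \in H_1^{\mathcal{P}}(\Sigma;\mathbb Z)$ and reduce to the case where each is represented by a generator; since $\widetilde d$ is well-defined on classes by the previous lemma, it suffices to check $\widetilde d(X,f^{-1}X)[c_1 + c_2] = \widetilde d(X,f^{-1}X)[c_1] + \widetilde d(X,f^{-1}X)[c_2]$. Second, for each of $c_1, c_2$ choose a representing loop in $\widehat\Sigma$: if the generator is a simple closed curve, take it directly; if it is an arc $h$ joining two boundary components in the same block, take $\gamma := h \ast h_1$ with $h_1$ an arc in $\widehat\Sigma \setminus \Sigma^\circ$ as in the construction preceding Remark~\ref{rmk:note}. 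In all cases $\widetilde d(X,f^{-1}X)$ of the generator equals $\tfrac12(\widetilde w_X(f\gamma) - \widetilde w_X(\gamma))$ for the chosen loop $\gamma$ in $\widehat\Sigma$. Third, pass to $\widehat\Sigma$: under the isomorphism $H_1^{\mathcal{P}}(\Sigma;\mathbb Z) \cong H_1^{\widehat{\mathcal P}}(\widehat\Sigma;\mathbb Z)$, the class $[c_1]+[c_2]$ is carried to $[\gamma_1] + [\gamma_2]$, which is represented by an honest smooth simple closed curve $\gamma$ in $\widehat\Sigma$ (after a small homotopy making the concatenation smooth and embedded, using that $\widehat\Sigma$ has genus to spare). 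Finally, invoke that $\tfrac12(\widetilde w_X(f(-)) - \widetilde w_X(-))$ descends to the homomorphism $\overline d(X, f^{-1}X)$ on $\overline{H}_1^{\widehat{\mathcal P}}(\widehat\Sigma;\mathbb Z)$ — equivalently, $\widetilde w_X$ is additive under concatenation of loops with matching tangent directions by Remark~\ref{rmk:note} — to conclude
\[
\widetilde d(X,f^{-1}X)[\gamma] = \widetilde d(X,f^{-1}X)[\gamma_1] + \widetilde d(X,f^{-1}X)[\gamma_2].
\]

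The main obstacle I expect is bookkeeping at the endpoints and cusps: when $c_1$ and $c_2$ are both arcs, the natural representative of $[c_1]+[c_2]$ is the concatenation $h_1 \ast (h_1)_{\mathrm{cl}} \ast h_2 \ast (h_2)_{\mathrm{cl}}$ of two arc-loops, and one must verify that this can be arranged as a single smooth loop in $\widehat\Sigma$ with tangent directions matching at all junctions, so that Remark~\ref{rmk:note}'s additivity of $\widetilde w_X$ applies cleanly and the closing arcs $(h_i)_{\mathrm{cl}}$ cancel in homology without affecting the winding-number count. The cusp-handling is exactly what was already set up in Remark~\ref{rmk:note} (the closing arcs cancel via $f\gamma \ast \gamma^{-1}$), so the additivity statement is really just a repeated application of that remark together with the fact that the winding-number difference was shown to be independent of the choice of closing arc. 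Everything else is formal, following from the definitions and the preceding lemma.
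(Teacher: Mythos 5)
Your proof is correct and lands close to the paper's, but it routes the arc--plus--arc case differently. The paper argues in three cases: additivity for two closed curves is immediate because $d(X,f^{-1}X)$ is already a homomorphism on $H_1(\Sigma;\mathbb{Z})$; for two concatenable arcs $h_1,h_2$ it computes directly inside $\Sigma$, using cyclic invariance of $\widetilde{w}_X$ and additivity of winding numbers under concatenation with matching tangent spaces to get $\widetilde{w}_X(f(h_1)\ast h_1^{-1})+\widetilde{w}_X(f(h_2)\ast h_2^{-1})=\widetilde{w}_X(f(h_1\ast h_2)\ast(h_1\ast h_2)^{-1})$; and only in the mixed arc--plus--closed-curve case does it pass to $\widehat{\Sigma}$ via Remark~\ref{rmk:note}. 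You instead push every case into $\widehat{\Sigma}$ and invoke additivity of the difference cocycle on closed curves there. That is legitimate and arguably more uniform, but it shifts the burden onto a step you should make explicit: that closing up a representative of $[c_1]+[c_2]$ produces a loop whose class agrees with $[\gamma_1]+[\gamma_2]$ up to classes supported in $\widehat{\Sigma}\setminus\Sigma^{\circ}$, on which the difference cocycle vanishes because $\widetilde{f}$ is the identity there; this is exactly the content of the observation following Remark~\ref{rmk:note} that the value is independent of the closing arc, so it is available but needs to be cited. One genuine blemish: your claim that $[\gamma_1]+[\gamma_2]$ is represented by a smooth \emph{simple} closed curve is both unnecessary and false in general (twice a primitive class has no simple representative); drop it and use instead that $d(X,\widetilde{f}^{-1}X)$ is additive on all of $H_1(\widehat{\Sigma};\mathbb{Z})$, not only on classes with simple representatives.
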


\begin{proof}
For smooth closed oriented curves $\gamma_1$ and $\gamma_2$ 
by the definition of $d(X, f^{-1}X)$, we have
\[
\widetilde{d}(X, f^{-1}X)[\gamma_1\ast \gamma_2]= \widetilde{d}(X, f^{-1}X)[\gamma_1]+ \widetilde{d}(X, f^{-1}X)[\gamma_2].
\]

Let $h_1$ and $h_2$ be smooth oriented arcs whose endpoints are on 
the boundary components of $\Sigma$ contained in the same element of $\mathcal{P}$ 
and let us assume that the initial point of $h_2$ is the same as the 
terminal point of $h_1$. Let $[h]$ denote the sum of two homology classes $[h_1]$ and $[h_2]$. 
We obtain the following equalities:
\begin{eqnarray}
\widetilde{d}(X, f^{-1}X)[h_1]+ \widetilde{d}(X, f^{-1}X)[h_2]
& = & \frac{\widetilde{w}_X(f(h_1)\ast h_1^{-1})}{2}+ \frac{\widetilde{w}_X(f(h_2)\ast h_2^{-1})}{2}\nonumber \\
& = & \frac{\widetilde{w}_X(h_1^{-1}\ast f(h_1))}{2}+ \frac{\widetilde{w}_X(f(h_2)\ast h_2^{-1})}{2}\nonumber \\
& = & \frac{\widetilde{w}_X(h_1^{-1}\ast f(h_1)\ast f(h_2)\ast h_2^{-1})}{2} \nonumber \\
& = & \frac{\widetilde{w}_X(f(h_1)\ast f(h_2)\ast h_2^{-1}\ast h_1^{-1})}{2} \nonumber \\
& = & \frac{\widetilde{w}_X(f(h_1\ast h_2)\ast (h_1\ast h_2)^{-1})}{2} \nonumber \\
& = & \widetilde{d}(X, f^{-1}X)[h_1\ast h_2] \nonumber \\
& = & \widetilde{d}(X, f^{-1}X)[h]. \nonumber
\end{eqnarray}
Now let $[h']$ denote a homology class whose representatives are arcs. Let $\gamma$ be a smooth oriented arc whose homology class $[\gamma]$ is the sum of $[h']$ and a homology class $[\alpha]$ with closed curve representatives. As in the previous paragraph of Remark~\ref{rmk:note}, we can obtain a smooth closed oriented curve $\alpha'$ by concatenating $h'$ with a smooth oriented arc in the complement of $\Sigma$. Hence, we have
\begin{eqnarray}
\widetilde{d}(X, f^{-1}X)[h']+ \widetilde{d}(X, f^{-1}X)[\alpha]
& = & \frac{\widetilde{w}_X(f(h')\ast h'^{-1})}{2} + \frac{\widetilde{w}_X(f\alpha) - \widetilde{w}_X (\alpha)}{2} \nonumber \\
& = & \frac{\widetilde{w}_X(f\alpha') - \widetilde{w}_X (\alpha')}{2} +\frac{\widetilde{w}_X(f\alpha) - \widetilde{w}_X (\alpha)}{2} \nonumber \\
& = & \widetilde{d}(X, f^{-1}X) [\alpha'\ast \alpha] \nonumber \\
& = & \widetilde{d}(X, f^{-1}X) [h'\ast \alpha] \nonumber\quad \text{ (by Remark~\ref{rmk:note})}\\
& = & \widetilde{d}(X, f^{-1}X) [\gamma]. \nonumber
\end{eqnarray}
\end{proof}

\begin{definition} 
The map $\widetilde{e}_X:\mathcal{I}(\Sigma, \mathcal{P})\rightarrow \textnormal{Hom}  
(H_1^{\mathcal{P}}(\Sigma; \mathbb{Z}), \mathbb{Z})$ is defined to be 
$\widetilde{e}(f):= \widetilde{d}(X, f^{-1}X)$. More explicitly, it is defined as follows:

If $[\gamma]$ has a smooth closed curve representative $\gamma$,
$$\widetilde{e}_X(f)[\gamma]:= \frac{\widetilde{w}_X(f\gamma)-\widetilde{w}_X(\gamma)}{2}.$$

If $h$ is a smooth oriented arc representing a homology class $[h]$ in $H_1^{\mathcal{P}}(\Sigma; \mathbb{Z})$,
$$\widetilde{e}_X(f)[h]:= \frac{\widetilde{w}_X(f(h)\ast h^{-1})}{2}.$$
\end{definition}

\begin{lemma}
The map $\widetilde{e}_X:\mathcal{I}(\Sigma, \mathcal{P})
\rightarrow \textnormal{Hom} (H_1^{\mathcal{P}}(\Sigma; \mathbb{Z}), \mathbb{Z})$ 
is a homomorphism.
\end{lemma}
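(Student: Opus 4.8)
The plan is to show that $\widetilde{e}_X(fg) = \widetilde{e}_X(f) + \widetilde{e}_X(g)$ as elements of $\textnormal{Hom}(H_1^{\mathcal{P}}(\Sigma;\mathbb{Z}),\mathbb{Z})$, by checking the identity on the two types of generators separately: homology classes $[\gamma]$ with a smooth closed curve representative, and homology classes $[h]$ with a properly embedded arc representative. Since both sides are already known to be homomorphisms on $H_1^{\mathcal{P}}(\Sigma;\mathbb{Z})$ by the previous lemma, it suffices to verify additivity in $f$ on each generator.

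For a smooth closed oriented curve $\gamma$, I would use the cocycle identity for $d(X,\cdot)$ from Chillingworth, or rather derive it directly from the winding number description. Writing $\widetilde{e}_X(fg)[\gamma] = \tfrac{1}{2}(\widetilde{w}_X(fg\gamma) - \widetilde{w}_X(\gamma))$, I insert the intermediate term $\widetilde{w}_X(g\gamma)$ to get $\tfrac{1}{2}(\widetilde{w}_X(fg\gamma) - \widetilde{w}_X(g\gamma)) + \tfrac{1}{2}(\widetilde{w}_X(g\gamma) - \widetilde{w}_X(\gamma))$. The second summand is exactly $\widetilde{e}_X(g)[\gamma]$. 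The first summand is $\tfrac{1}{2}(\widetilde{w}_X(f(g\gamma)) - \widetilde{w}_X(g\gamma))$, which would be $\widetilde{e}_X(f)[g\gamma]$; since $g \in \mathcal{I}(\Sigma,\mathcal{P})$ acts trivially on $H_1^{\mathcal{P}}(\Sigma;\mathbb{Z})$, we have $[g\gamma] = [\gamma]$ in $H_1^{\mathcal{P}}(\Sigma;\mathbb{Z})$, and by the first lemma of this section (independence of representative) this equals $\widetilde{e}_X(f)[\gamma]$. This is the key point where the Torelli hypothesis on $g$ enters.

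For an arc $h$ representing $[h] \in H_1^{\mathcal{P}}(\Sigma;\mathbb{Z})$, I would run the same argument using the concatenation description. We have $\widetilde{e}_X(fg)[h] = \tfrac{1}{2}\widetilde{w}_X(fg(h)\ast h^{-1})$. Using Remark~\ref{rmk:note} (additivity of winding numbers under concatenation with matching tangent spaces at endpoints, noting that $g$ and $fg$ fix a neighborhood of $\partial h$ pointwise so all the relevant tangent vectors agree), I split $fg(h)\ast h^{-1}$ as the concatenation of $fg(h)\ast (g(h))^{-1}$ and $g(h)\ast h^{-1}$, giving
\[
\tfrac{1}{2}\widetilde{w}_X(fg(h)\ast h^{-1}) = \tfrac{1}{2}\widetilde{w}_X\bigl(f(g(h))\ast (g(h))^{-1}\bigr) + \tfrac{1}{2}\widetilde{w}_X\bigl(g(h)\ast h^{-1}\bigr).
\]
The second term is $\widetilde{e}_X(g)[h]$. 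The first term is $\widetilde{e}_X(f)[g(h)]$; since $g(h)$ is again a properly embedded arc joining boundary components in the same block (as $g$ preserves the partition and fixes the boundary), it represents the class $[g(h)] = [h]$ in $H_1^{\mathcal{P}}(\Sigma;\mathbb{Z})$, again because $g$ is in the subsurface Torelli group, so by the first lemma this equals $\widetilde{e}_X(f)[h]$.

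The main obstacle I anticipate is purely bookkeeping: making sure that in the arc case the cusps line up correctly so that Remark~\ref{rmk:note} genuinely applies — i.e. that $fg(h)$, $g(h)$, and $h$ all share the same (projective) tangent directions at the endpoints, which holds because every mapping class under consideration is the identity on a neighborhood of $\partial\Sigma$. Once that is in place, the argument is a direct telescoping plus an appeal to the $\mathcal{P}$-invariance of the action of Torelli elements and the representative-independence lemma. I would close by remarking that additivity on generators, together with the fact that both $\widetilde{e}_X(fg)$ and $\widetilde{e}_X(f)+\widetilde{e}_X(g)$ are homomorphisms on $H_1^{\mathcal{P}}(\Sigma;\mathbb{Z})$, yields $\widetilde{e}_X(fg) = \widetilde{e}_X(f)+\widetilde{e}_X(g)$, so $\widetilde{e}_X$ is a homomorphism.
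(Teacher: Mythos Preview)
Your proposal is correct and follows essentially the same approach as the paper's proof: both telescope by inserting $g(\alpha)$ (or $g(h)$), use additivity of the projective winding number under concatenation, and finish by invoking $[g(\alpha)]=[\alpha]$ since $g$ lies in the subsurface Torelli group. The only cosmetic difference is that for closed curves the paper cites Johnson's Lemma~5B rather than writing out the telescoping explicitly as you do.
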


\begin{proof} By Lemma $5B$ of \cite{Johnson}, it is easy to see that $\widetilde{e}_X(fg)[\gamma]= \widetilde{e}_X(f)[\gamma]+ \widetilde{e}_X(g)[\gamma]$ for a smooth closed oriented curve $\gamma$.

For a smooth oriented arc $\alpha_i$,
\begin{eqnarray}\widetilde{e}_X(fg)[\alpha_i]& = &\frac{\widetilde{w}_X(fg(\alpha_i)\ast\alpha^{-1}_i)}{2}\nonumber\\
                                             & = &\frac{\widetilde{w}_X(f(g\alpha_i)\ast g(\alpha^{-1}_i)\ast g(\alpha_i)\ast \alpha^{-1}_i)}{2}\nonumber\\
                                             & = &\frac{\widetilde{w}_X(f(g\alpha_i)\ast g(\alpha^{-1}_i))}{2}+\frac{\widetilde{w}_X(g(\alpha_i)\ast\alpha^{-1}_i)}{2}\nonumber\\
                                             & = &\widetilde{e}_X(f)[g(\alpha_i)]+\widetilde{e}_X(g)[\alpha_i].
                                                                                              \nonumber \end{eqnarray}
Since $g\in\mathcal{I}(\Sigma, \mathcal{P})$, $g(\alpha_i)$ and $\alpha_i$ represent the same element of $H_1^{\mathcal{P}}(\Sigma; \mathbb{Z})$. Hence we get 
\[
 \widetilde{e}_X(fg)=\widetilde{e}_X(f)+\widetilde{e}_X(g).
\]
\end{proof}

Notice that $\widetilde{e}_X$ depends on the choice of the 
nonvanishing vector field $X$.


\subsection{Symplectic Basis for $\mathbf{H_1^{\mathcal{P}}(\Sigma; \mathbb{Z})}$}
\label{section:symplecticbasis}
In this section, we introduce a symplectic basis for 
$H_1^{\mathcal{P}}(\Sigma; \mathbb{Z})$. 
 
Let $(\Sigma, \mathcal{P})$ be a partitioned surface of genus $g$ 
with the partition  
\[
\mathcal{P} = \{\{\partial_1^1,
\ldots, \partial_{k_1}^1\},\ldots,\{\partial_1^m,\ldots, \partial_{k_m}^m\}\}.
\] 
Let $\mathcal{Q}$ be a subset of the boundary $\partial \Sigma$
containing exactly one point from each boundary component. 

Let us choose a set of simple closed curves 
$\{x_1, y_1, x_2, y_2, \dots, x_g, y_g\}$ on $\Sigma$ satisfying 
\begin{itemize}
	\item  $x_i\cap x_j= \emptyset, \quad x_i\cap y_j= \emptyset, \quad y_i\cap y_j= \emptyset$ for $i\neq j,$
	\item $x_i$ intersects $y_i$ transversely at one point, and 
	\item under the filling map 
\[
H_1(\Sigma; \mathbb{Z})\rightarrow H_1(\overline{\Sigma}; \mathbb{Z})
\] 
$\{[x_i], [y_i]\ |\ i= 1, \ldots, g\}$ maps to a symplectic basis of 
$H_1(\overline{\Sigma}; \mathbb{Z})$. Here, $\overline{\Sigma}$ denotes the 
closed surface obtained by gluing a disc along each boundary component and the filling map is induced by inclusion.
\end{itemize}

For each $l = 1, 2, \ldots, m$, choose oriented arcs $h_j^l$ connecting 
$\partial_j^l\cap \mathcal{Q}$ to $\partial_{j+1}^l\cap\mathcal{Q}$ for 
$j=1, 2, \ldots, k_l-1$ such that 

\begin{itemize}
\item $h_j^l$ are disjoint from $x_i, y_i$,

\item $h_j^l$ are pairwise disjoint except perhaps at endpoints,

\item each $h_j^l$ is oriented so that the algebraic 
intersection number of the homology classes $[h_j^l]$ 
and $[\partial_1^l+ \cdots + \partial_{j}^l]$ is $1$,
where the orientations of the boundary components are induced from the orientation of the surface.
\end{itemize}

The union of the sets 
\begin{itemize}
\item $\{[x_1], [y_1], \ldots, [x_g], [y_g]\}$,

\item $\{[h_1^1], [h_2^1], \ldots, [h_{k_1-1}^1], [h_1^2], \ldots, 
[h_{k_2-1}^2], \ldots, [h_1^m],\ldots, [h_{k_m-1}^m]\},$

\item $\{\ [\partial_1^1], [\partial_1^1+ \partial_2^1], \ldots, [\partial_1^1+ \cdots + \partial_{k_1-1}^1], [\partial_1^2], \ldots, [\partial_1^2+ \cdots + \partial_{k_2-1}^2], \ldots, [\partial_1^m], 
\ldots,$ $[\partial_1^m+ \cdots + \partial_{k_m-1}^m]\}$
\end{itemize}
is a basis $\mathscr{B}$ of $H_1^{\mathcal{P}}(\Sigma; \mathbb{Z})$.  
   
In this basis, $\{x_{i}, y_{i}\}$ are closed curves, the $\{h_{i_l}^l\}$s are 
arcs, and $\{\partial_{i_l}^l\}$s are boundary curves as shown in Figure \ref{fig:surface}.

This basis $\mathscr{B}$ has the following properties:
\begin{itemize}
	\item $\widehat{i}([x_{i}], [x_{j}]) = \widehat{i}([y_{i}], [y_{j}]) = 0, \qquad \widehat{i}([x_{i}], [y_{j}]) =\delta_{ij},\quad \text{for all } 1 \le i, j \le g,$
	\item $\widehat{i}([h_i^l], [\partial_1^l+ \cdots + \partial_j^l]) = \delta_{ij},\quad \text{for all } 1 \le i, j \le k_l-1, 1\le l \le m,$
	\item $\widehat{i}([h_j^l], [x_{i}]) = \widehat{i}([h_j^l], [y_{i}]) = 0, 
	\text{for all } 1\le i\le g, 1\le j\le k_l-1, 1\le l \le m,$ 
	\item $\widehat{i}([\partial_1^l+ \cdots + \partial_{j}^l], [x_{i}]) 
	= \widehat{i}([\partial_1^l+ \cdots + \partial_{j}^l], [y_{i}])= 0, 
	\text{for all } 1\le i\le g, 1\le j\le k_l-1, 1\le l \le m.$
\end{itemize}
Here, $\delta_{ij}$ denotes the Kronecker delta and $\widehat{i}(\cdot, \cdot)$ 
denotes the algebraic intersection number. Note that although the endpoints of 
the representatives of homology basis elements $[h_{j}^l]$ coincide with 
$[h_{j+1}^l]$ on $\partial\Sigma$, we define the algebraic intersection of 
arcs  $\widehat{i}([h_{j}^{l}], [h_{j+1}^{l}])$ to be 0 for all 
$1\le j\le k_l-1, 1\le l \le m$. 

\begin{figure}[htb] 
 \centering
  \includegraphics[width=8.0cm]{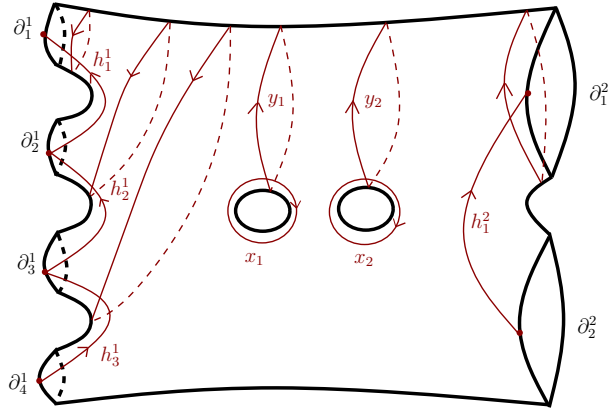}
   \vspace{1.em}
    \caption{An example illustrating homology basis elements of $H_1^{\mathcal{P}}(\Sigma_{2,6}; \mathbb{Z})$, where $\mathcal{P}= \{\{\partial_1^1, \partial_2^1, \partial_3^1, \partial_4^1\}, \{\partial_1^2, \partial_2^2\}\}$.}
       \label{fig:surface}
\end{figure}

We now define the dual of a homology class of 
$H_1^{\mathcal{P}}(\Sigma; \mathbb{Z})$ by using 
this intersection form. Note that the intersection form $\widehat{i}$
is nondegenerate. Therefore the map 
\[ 
D: H_1^{\mathcal{P}}
(\Sigma; \mathbb{Z})\rightarrow \text{Hom} (H_1^{\mathcal{P}}
(\Sigma; \mathbb{Z}), \mathbb{Z})
\]
sending $[x]\in H_1^{\mathcal{P}}(\Sigma; \mathbb{Z})$ to $\widehat{i}(\cdot, [x])$ is an isomorphism.

 
\subsection{Naturality and Uniqueness of $\mathbf{\widetilde{e}_X}$}
\label{section:naturality1}
In this section, we show that $\widetilde{e}_X$ is natural and that it is the unique nontrivial natural
homomorphism from $\mathcal{I}(\Sigma, \mathcal{P})$ to $\text{Hom} (H_1^{\mathcal{P}}(\Sigma; \mathbb{Z}), \mathbb{Z})$.

\begin{remark}
\label{rmk:homology}
Suppose that $(\Sigma,\mathcal{P})$ is a totally separated surface with 
boundary components $z_1,z_2,\ldots,z_n$, so that 
$\mathcal{P}= \{\{z_1\}, \ldots, \{z_n\}\}$. Suppose also that $\Sigma'$ is a 
partitioned surface with a partition $\mathcal{P}'$ such that
there is an embedding $(\Sigma, \mathcal{P}) \hookrightarrow (\Sigma', \mathcal{P}')$ of partitioned surfaces. For $1\leq j\leq n$, let $V_j$ be a connected component 
of $\Sigma'\setminus \Sigma^{\circ}$ containing $z_j$ as a boundary component and let
$\mathcal{P}_j$ be the partition of the boundary of $V_j$ consisting of 
 $\{ z_j\}$ and a subset of $\mathcal P'$. 
By identifying  $H_1^{\mathcal{P}_j}(V_j; \mathbb{Z})$ and $H_1^{\mathcal{P}}(\Sigma; \mathbb{Z})$ with their images in $H_1^{\mathcal{P}'}(\Sigma'; \mathbb{Z})$, we can write 
\[
H_1^{\mathcal{P}'}(\Sigma'; \mathbb{Z})= H_1^{\mathcal{P}}(\Sigma; \mathbb{Z})\oplus H_1^{\mathcal{P}_1}(V_1; \mathbb{Z})\oplus\cdots\oplus H_1^{\mathcal{P}_n}(V_n; \mathbb{Z}).
\]
\end{remark}

If $\Sigma$ is totally separated with the partition 
$\mathcal{P}$ and if 
$i: (\Sigma, \mathcal{P}) \hookrightarrow (\Sigma', \mathcal{P}')$ is an 
embedding of partitioned surfaces, then there is a natural projection 
\[
r_*: H_1^{\mathcal{P}'}(\Sigma'; \mathbb{Z})\to  H_1^{\mathcal{P}}(\Sigma; \mathbb{Z})
\]
which gives a natural homomorphism
\[ 
r^* : \textnormal{Hom} (H_1^{\mathcal{P}}(\Sigma; \mathbb{Z}), \mathbb{Z}) 
\to  \textnormal{Hom} (H_1^{\mathcal{P}'}(\Sigma'; \mathbb{Z}), \mathbb{Z}).
\]



\begin{proposition}
\label{prop:natural}
Let $\Sigma$ be a totally separated surface with the partition 
$\mathcal{P}$ and let 
$i: (\Sigma, \mathcal{P}) \hookrightarrow (\Sigma', \mathcal{P}')$
be an embedding of partitioned surfaces. Let $X$ be a nonvanishing vector field 
on $\Sigma'$ and let $Y$ denote the restriction of $X$ to $\Sigma$.
Then the homomorphism $\widetilde{e}_Y$ is natural in the sense that the diagram
\begin{equation}
\xymatrix{
 \mathcal{I}(\Sigma, \mathcal{P}) \ar[r]^{i_\ast} \ar[d]_{\widetilde{e}_Y} &
                                        \mathcal{I}(\Sigma', \mathcal{P}')\ar[d]^{\widetilde{e}_X} \\
 \textnormal{Hom} (H_1^{\mathcal{P}}(\Sigma; \mathbb{Z}), \mathbb{Z}) \ar[r]_{r^*}                          & \textnormal{Hom} (H_1^{\mathcal{P}'}(\Sigma'; \mathbb{Z}), \mathbb{Z})} 
\label{eq:01}
\end{equation}
commutes.
\end{proposition}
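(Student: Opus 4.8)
The plan is to verify commutativity of diagram~\eqref{eq:01} by evaluating both composites on an arbitrary $f \in \mathcal{I}(\Sigma, \mathcal{P})$ and then on an arbitrary homology class in $H_1^{\mathcal{P}'}(\Sigma'; \mathbb{Z})$, using the direct sum decomposition of Remark~\ref{rmk:homology}. Since $\Sigma$ is totally separated, write $H_1^{\mathcal{P}'}(\Sigma'; \mathbb{Z}) = H_1^{\mathcal{P}}(\Sigma; \mathbb{Z}) \oplus H_1^{\mathcal{P}_1}(V_1; \mathbb{Z}) \oplus \cdots \oplus H_1^{\mathcal{P}_n}(V_n; \mathbb{Z})$, and recall that the projection $r_*$ kills the summands coming from the complementary pieces $V_j$ while restricting to the identity on the $H_1^{\mathcal{P}}(\Sigma;\mathbb{Z})$ summand. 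So $r^*\bigl(\widetilde{e}_Y(f)\bigr)$ is the homomorphism that agrees with $\widetilde{e}_Y(f)$ on classes supported in $\Sigma$ and vanishes on classes supported in the $V_j$. The task reduces to checking that $\widetilde{e}_X(i_*f)$ has exactly these two properties.

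First I would treat a class $[\gamma]$ (closed curve or arc) that lies in the image of $H_1^{\mathcal{P}}(\Sigma;\mathbb{Z})$, i.e. is representable by a smooth closed curve or a $\mathcal{P}$-admissible arc inside $\Sigma$. Since $f$ fixes a neighborhood of $\partial\Sigma$ pointwise and $i_*f$ acts as $f$ on $\Sigma$ and as the identity outside, the curve $(i_*f)(\gamma)$ (resp. the cusped loop $(i_*f)(h)\ast h^{-1}$) is literally equal to $f(\gamma)$ (resp. $f(h)\ast h^{-1}$) as a curve in $\Sigma \subset \Sigma'$; moreover $Y = X|_\Sigma$. Hence $\widetilde{w}_X$ computed in $PT(\Sigma')$ of a curve that lies entirely in $\Sigma$ equals $\widetilde{w}_Y$ computed in $PT(\Sigma)$ — winding number is a local-to-the-curve quantity and only depends on the germ of the vector field along the curve. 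Therefore $\widetilde{e}_X(i_*f)[\gamma] = \widetilde{e}_Y(f)[\gamma]$ by the defining formulas, which matches $r^*(\widetilde{e}_Y(f))$ on this summand. For the arc case I would explicitly use the construction preceding Remark~\ref{rmk:note}: represent $[h]$ via a closed curve $\gamma = h \ast h_1$ with $h_1$ in the complement, noting the complement inside $\widehat{\Sigma}$ can be taken inside $\Sigma' $ as well, so the same equality $\frac{\widetilde{w}_X(f\gamma) - \widetilde{w}_X(\gamma)}{2} = \frac{\widetilde{w}_X(f(h)\ast h^{-1})}{2}$ holds in $PT(\Sigma')$.

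Next I would handle a class supported in one of the $V_j$. Such a class is represented either by a simple closed curve $\delta \subset V_j$ or by an arc in $V_j$ admissible for $\mathcal{P}_j$. Because $i_*f$ restricts to the identity on $\Sigma' \setminus \Sigma^\circ \supseteq V_j$, we get $(i_*f)(\delta) = \delta$ (and $(i_*f)(h)\ast h^{-1}$ is a constant-speed trivial loop for an arc $h$ in $V_j$), so $\widetilde{w}_X((i_*f)\delta) = \widetilde{w}_X(\delta)$ and $\widetilde{e}_X(i_*f)$ vanishes on this class — matching $r^*(\widetilde{e}_Y(f))$, which also vanishes there by definition of $r_*$. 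Since $\widetilde{e}_X(i_*f)$ and $r^*(\widetilde{e}_Y(f))$ are both homomorphisms on $H_1^{\mathcal{P}'}(\Sigma';\mathbb{Z})$ that agree on each summand of the direct sum decomposition, they are equal, and the diagram commutes.

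The main obstacle I anticipate is the bookkeeping around arcs and cusps: one must make sure that the ``concatenate with an arc in the complement'' trick used to assign winding numbers to arc classes can be carried out compatibly in $\Sigma$, in $\widehat{\Sigma}$, and in $\Sigma'$ simultaneously, and that the independence-of-auxiliary-arc statement (proved after Remark~\ref{rmk:note} and in the first Lemma) transfers to $\Sigma'$. This is where the second condition in Definition~\ref{def:cat} — that $i$ extends to $\widehat{i}\colon\widehat\Sigma_1 \hookrightarrow \widehat\Sigma_2$ — does the real work, since it guarantees that $\mathcal{P}$-separating curves in $\Sigma$ remain $\mathcal{P}'$-separating in $\Sigma'$, so that homologous arc representatives in $H_1^{\mathcal{P}}(\Sigma;\mathbb{Z})$ stay homologous in $H_1^{\mathcal{P}'}(\Sigma';\mathbb{Z})$ and the winding-number formulas are well defined on the nose after pushing forward. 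Everything else is the observation that winding number depends only on the restriction of the vector field to a neighborhood of the curve, which makes $\widetilde{w}_X$ and $\widetilde{w}_Y$ agree on curves confined to $\Sigma$.
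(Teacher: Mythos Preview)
Your proposal is correct and follows essentially the same route as the paper: use the direct sum decomposition of Remark~\ref{rmk:homology}, check agreement on classes supported in $\Sigma$ via locality of winding numbers (since $Y=X|_\Sigma$), and check vanishing on classes supported in the $V_j$ since $i_*f$ is the identity there. The one simplification you overlooked is that $\Sigma$ is \emph{totally separated}, so every block of $\mathcal{P}$ is a singleton and $H_1^{\mathcal{P}}(\Sigma;\mathbb{Z})$ contains no arc classes whatsoever; the paper accordingly treats only closed curves on the $\Sigma$ summand, and your entire last paragraph of bookkeeping worries about arcs in $\Sigma$ is unnecessary for this proposition (arcs appear only on the $V_j$ side, where $i_*f=\mathrm{id}$ makes the computation trivial).
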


\begin{proof} 
Let $f\in\mathcal{I}(\Sigma, \mathcal{P})$, and let $i_\ast(f)=\widetilde{f}$.
Thus (the class of) the diffeomorphism $\widetilde{f}$ is equal to $f$ on $\Sigma$ and 
is the identity on the complement $\Sigma'\setminus\Sigma$. 
We show that $r^*(\widetilde{e}_Y(f))= \widetilde{e}_X(\widetilde{f})$.

Let $\gamma$ be a smooth oriented simple closed curve in $\Sigma$ representing a basis element of $H_1^{\mathcal{P}}(\Sigma; \mathbb{Z})$. Then, we have 
\[
r^*(\widetilde{e}_{Y}(f))[\gamma]=\widetilde{e}_{Y}(f) (r_* [\gamma])
=\widetilde{e}_{Y}(f) [\gamma]
= \frac{\widetilde{w}_{Y}(f\gamma)-\widetilde{w}_{Y}(\gamma)}{2}
\] 
and 
\[
\widetilde{e}_X(\widetilde{f})[\gamma]= \frac{\widetilde{w}_X(\widetilde{f}\gamma)-\widetilde{w}_X(\gamma)}{2}= \frac{\widetilde{w}_X(f\gamma)-\widetilde{w}_X(\gamma)}{2}.
\] 
Since $Y$ is the restriction of $X$ to $\Sigma$, we have 
$r^*(\widetilde{e}_{Y}(f))[\gamma]=\widetilde{e}_X(\widetilde{f})[\gamma].$

Now let $\gamma'$ be a smooth closed oriented curve or smooth oriented arc in some $V_j$ representing a homology basis element
in $H_1^{\mathcal{P}_j}(V_j; \mathbb{Z})$. In this case,
$r^*(\widetilde{e}_{Y}(f))[\gamma']= \widetilde{e}_{Y}(f) (r_*([\gamma']))=0$ 
because $r_*([\gamma'])=0$. Since $f(\gamma')=\gamma'$, we have 
$$
\widetilde{e}_X(\widetilde{f})[\gamma']= \frac{\widetilde{w}_X(\widetilde{f}\gamma')-\widetilde{w}_X(\gamma')}{2}= \frac{\widetilde{w}_X(\gamma')-\widetilde{w}_X(\gamma')}{2}= 0.
$$

Since $H_1^{\mathcal{P}'}(\Sigma'; \mathbb{Z})$ is the direct sum of 
$H_1^{\mathcal{P}}(\Sigma; \mathbb{Z})$ and $H_1^{\mathcal{P}_j}(V_j; \mathbb{Z})$,
it follows that $r^*(\widetilde{e}_{Y}(f))=\widetilde{e}_X(\widetilde{f})$ for every $f$ in
$\mathcal{I}(\Sigma, \mathcal{P})$, and hence $r^* \widetilde{e}_{Y}=\widetilde{e}_X i_*$.
\end{proof}

Suppose now that $\Sigma$ is any surface with a partition 
$\mathcal{P}=\{P_1,P_2,\ldots,P_n\}$, $|P_l|=n_l$, 
and  that $\widehat{\Sigma}$ is the totally separated surface, with 
the partition $\widehat{\mathcal{P}}$, obtained by gluing a sphere 
$S_l$ with $n_l+1$ holes along the boundary components in $P_l$, 
i.e. the minimal totally separated surface containing $\Sigma$ (c.f. Remark~\ref{rmk:totally}).
For an $l=1,2,\ldots,n$, suppose that 
$P_l=\{ \partial^l_1,\partial^l_2,\ldots,\partial^l_{n_l}\}$. For each $j=1,2,\ldots,{n_l}-1$,
choose smooth arcs $k^l_j$ on the complement $\widehat{\Sigma}\setminus\Sigma^{\circ}$ connecting $\mathcal Q\cap\partial_j^l$ to 
$\mathcal{Q}\cap\partial^l_{j+1}$. Here, $k^l_j$ are pairwise disjoint except perhaps at endpoints. 
Let us orient each $k^l_j$ so that concatenation $h^l_j\ast k^l_j$ is a smooth closed oriented 
curve in $\widehat{\Sigma}$, where $[h^l_j]$ is an element of the 
basis $\mathscr{B}$ defined in Subsection~\ref{section:symplecticbasis}. Let 
$\mathcal{P}_l= \{P_l, \{z_l\}\}$ be the partition of the boundary of $S^l$, 
where $z_l$ is the boundary component of $\widehat{\Sigma}$.
Then $K_l= \{[k^l_j]\}$ is a set of basis elements with arc representatives 
of $H_1^{\mathcal{P}_l}(S_l; \mathbb{Z})$. 
Let $\mathscr{K}$ denote the union $K_1\cup K_2\cup \cdots \cup K_n$.

Let us fix the symplectic basis $\mathscr{B}$ 
of $H_1^{\mathcal{P}}(\Sigma; \mathbb{Z})$ defined as in Subsection~\ref{section:symplecticbasis}. 

We then have an isomorphism 
\[ 
\psi_{\mathscr{K}}: H_1^{\mathcal{P}}(\Sigma; \mathbb{Z})\rightarrow H_1^{\widehat{\mathcal{P}}}(\widehat{\Sigma}; \mathbb{Z})
\]
by mapping basis elements with closed curve representatives to themselves 
and $[h^l_{j}]$ to $[h^l_j\ast k^l_j]$. 

By using $\psi_{\mathscr{K}}$, we get the isomorphism
\[
\psi^*_{\mathscr{K}}:\textnormal{Hom}(H_1^{\widehat{\mathcal{P}}}(\widehat{\Sigma}; \mathbb{Z}), \mathbb{Z})\rightarrow \textnormal{Hom}(H_1^{\mathcal{P}}(\Sigma; \mathbb{Z}), \mathbb{Z})
\]
defined to be $\psi^*_{\mathscr{K}}(\chi)= \chi\circ \psi_{\mathscr{K}}$ for any $\chi\in \textnormal{Hom}(H_1^{\widehat{\mathcal{P}}}(\widehat{\Sigma}; \mathbb{Z}), \mathbb{Z})$.

\begin{proposition}
\label{prop:commut}
Let $(\Sigma, \mathcal{P})$ be a partitioned surface and let 
$(\widehat{\Sigma}, \widehat{\mathcal{P}})$ be a minimal totally separated surface
containing $\Sigma$. Let  
$i: (\Sigma, \mathcal{P}) \hookrightarrow (\widehat{\Sigma}, \widehat{\mathcal{P}})$
be an inclusion coming from an embedding of partitioned surfaces. Let $X$ be a nonvanishing vector field 
on $\widehat{\Sigma}$ and let $Y$ denote the restriction of $X$ to $\Sigma$.
Then the homomorphism $\widetilde{e}_Y$ is natural in the sense that the diagram
\begin{equation}
\xymatrix{
 \mathcal{I}(\Sigma, \mathcal{P}) \ar[r]^{i_\ast} \ar[d]_{\widetilde{e}_Y} &
 \mathcal{I}(\widehat{\Sigma}, \widehat{\mathcal{P}})\ar[d]^{\widetilde{e}_X} \\
\textnormal{Hom} (H_1^{\mathcal{P}}(\Sigma; \mathbb{Z}), \mathbb{Z})    & 
\textnormal{Hom} (H_1^{\widehat{\mathcal{P}}}(\widehat{\Sigma}; \mathbb{Z}), \mathbb{Z})\ar[l]_{\psi^{*}_{\mathscr{K}}}} 
\label{eq:0111}
\end{equation}
commutes.
\end{proposition}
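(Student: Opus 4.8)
The plan is to reduce Proposition~\ref{prop:commut} to Proposition~\ref{prop:natural} via the factorization of the embedding $i\colon(\Sigma,\mathcal P)\hookrightarrow(\widehat\Sigma,\widehat{\mathcal P})$ through the intermediate totally separated surfaces $S_l$. Concretely, I would first record that $\widehat\Sigma$ is built from $\Sigma$ by gluing the holed spheres $S_l$ along the blocks $P_l$, and that this realizes $i$ as a composite of an embedding of the \emph{totally separated} surface $\Sigma_{\mathrm{ts}}$ (namely $\Sigma$ with each block artificially separated, or more directly one passes through the subsurface $S_1\sqcup\cdots\sqcup S_n$ whose ambient surface is $\widehat\Sigma$) into $\widehat\Sigma$. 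The point is that Proposition~\ref{prop:natural} already handles naturality for an embedding whose \emph{source} is totally separated; here the source $\Sigma$ need not be, but the complementary pieces $V_j$ in Remark~\ref{rmk:homology} are exactly the $S_l$, which \emph{are} totally separated, so I can apply Proposition~\ref{prop:natural} with the roles arranged so that the "totally separated" surface is the disjoint union of the $S_l$'s inside $\widehat\Sigma$.

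The key computational steps are as follows. Fix $f\in\mathcal I(\Sigma,\mathcal P)$ with $i_\ast(f)=\widetilde f$, where $\widetilde f$ equals $f$ on $\Sigma$ and is the identity on $\widehat\Sigma\setminus\Sigma^\circ$. I must show $\widetilde e_Y(f)=\psi^{*}_{\mathscr K}(\widetilde e_X(\widetilde f))$, i.e. for every basis element $b\in\mathscr B$ of $H_1^{\mathcal P}(\Sigma;\mathbb Z)$ that $\widetilde e_Y(f)[b]=\widetilde e_X(\widetilde f)[\psi_{\mathscr K}(b)]$. Split into the two kinds of basis elements. If $b=[\gamma]$ with $\gamma$ a simple closed curve in $\Sigma$, then $\psi_{\mathscr K}(b)=[\gamma]$ and the identity is immediate since $Y=X|_\Sigma$, $\widetilde f\gamma=f\gamma$, and the winding numbers agree (this is essentially the first case of the proof of Proposition~\ref{prop:natural}). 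If $b=[h^l_j]$ is an arc class, then $\psi_{\mathscr K}(b)=[h^l_j\ast k^l_j]=[\gamma]$, a smooth closed curve in $\widehat\Sigma$; by the definition of $\widetilde e_X$ on closed curves and then Remark~\ref{rmk:note} (with $h=h^l_j$ and $h_1=k^l_j$), one has
\[
\widetilde e_X(\widetilde f)[\gamma]=\frac{\widetilde w_X(\widetilde f\gamma)-\widetilde w_X(\gamma)}{2}=\frac{\widetilde w_X(\widetilde f(h^l_j)\ast (h^l_j)^{-1})}{2}.
\]
Since $\widetilde f$ agrees with $f$ on $\Sigma$ and the arc $h^l_j$ lies in $\Sigma$ with $X$ restricting to $Y$ there (and the cusps lie on $\partial\Sigma$ where $\widetilde f$, hence $f$, is the identity near the endpoints), the right-hand side equals $\frac{\widetilde w_Y(f(h^l_j)\ast (h^l_j)^{-1})}{2}=\widetilde e_Y(f)[h^l_j]$. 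The boundary-curve basis elements $[\partial^l_1+\cdots+\partial^l_i]$ map to themselves under $\psi_{\mathscr K}$ and both sides vanish on them since $f$ and $\widetilde f$ fix the boundary pointwise.

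Finally I would remark that because $\mathscr B$ is a basis of $H_1^{\mathcal P}(\Sigma;\mathbb Z)$, agreement on all basis elements gives $\widetilde e_Y(f)=\psi^{*}_{\mathscr K}(\widetilde e_X(\widetilde f))=\psi^{*}_{\mathscr K}(\widetilde e_X(i_\ast f))$ for all $f$, which is exactly commutativity of diagram~(\ref{eq:0111}). I expect the only genuinely delicate point to be the arc case: one must make sure that the winding-number computation of $\widetilde w_X(\widetilde f(h^l_j)\ast(h^l_j)^{-1})$ really is insensitive both to the choice of completing arc $k^l_j$ (already established in the paragraph after Remark~\ref{rmk:note}) and to passing from $X$ on $\widehat\Sigma$ to $Y$ on $\Sigma$ — this is where I would be careful that the closed curve $\widetilde f(h^l_j)\ast(h^l_j)^{-1}$ is entirely supported in $\Sigma$, so that its winding number is computed using only $Y=X|_\Sigma$, and that the matching of tangent directions at the two cusps is the same whether measured against $X$ or $Y$. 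Everything else is a direct unwinding of the definitions of $\widetilde e_X$, $\psi_{\mathscr K}$, and $\psi^{*}_{\mathscr K}$, together with the additivity of winding numbers under concatenation from Remark~\ref{rmk:note}.
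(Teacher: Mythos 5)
Your proof is correct and follows essentially the same route as the paper: a direct verification on the basis $\mathscr{B}$, treating closed-curve classes via $Y=X|_\Sigma$ and arc classes via $\psi_{\mathscr{K}}[h^l_j]=[h^l_j\ast k^l_j]$, Remark~\ref{rmk:note}, and the cancellation of $k^l_j\ast(k^l_j)^{-1}$, which is exactly the computation in the paper's proof. The only cosmetic difference is that your opening paragraph announces a reduction to Proposition~\ref{prop:natural} that you never actually carry out or need --- the computation in your second paragraph is self-contained and suffices.
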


\begin{proof}
Let $f\in\mathcal{I}(\Sigma, \mathcal{P})$, and let $i_\ast(f)=\widetilde{f}$.
Thus $\widetilde{f}$ is equal to $f$ on $\Sigma$ and 
is the identity on the complement $\widehat{\Sigma}\setminus\Sigma$. 
We show that $\widetilde{e}_Y(f)= \psi^*_{\mathscr{K}}\widetilde{e}_X(\widetilde{f})$.

For any homology basis element $[\gamma]\in H_1^{\mathcal{P}}(\Sigma; \mathbb{Z})$ 
with a smooth closed oriented curve representative $\gamma$, we have 
\[
\widetilde{e}_Y(f)[\gamma]= \frac{\widetilde{w}_Y(f\gamma)-\widetilde{w}_Y(\gamma)}{2}
\]
and 
\begin{eqnarray}
\psi^*_{\mathscr{K}}\widetilde{e}_{X}(\widetilde{f})([\gamma])
& = & \widetilde{e}_{X}(\widetilde{f})(\psi_{\mathscr{K}}[\gamma])\nonumber \\
& = & \widetilde{e}_{X}(\widetilde{f})[\gamma] \nonumber \\
& = & \frac{\widetilde{w}_{X}(\widetilde{f}\gamma)-\widetilde{w}_{X}(\gamma)}{2} \nonumber \\
& = & \frac{\widetilde{w}_{X}(f\gamma)-\widetilde{w}_{X}(\gamma)}{2}. \nonumber
\end{eqnarray}
Since $X=Y$ on $\Sigma$, we get the desired equality.
 
For any homology basis element $[h^l_j]\in H_1^{\mathcal{P}}(\Sigma; \mathbb{Z})$ 
with a smooth oriented arc representative $h^l_j$, we have
\[
\widetilde{e}_Y(f)[h^l_j] = \frac{w_Y(f(h^l_j)\ast (h^{l}_j)^{-1})}{2}
\]
and 
\begin{eqnarray}
\psi^*_{\mathscr{K}}\widetilde{e}_{X}(\widetilde{f})[h^l_j]
& = &  \widetilde{e}_X(\widetilde{f})(\psi_{\mathscr{K}}[h^l_j]) \nonumber \\
& = &  \widetilde{e}_X(\widetilde{f})[h^l_j\ast k^l_j] \nonumber \\
& = & \frac{\widetilde{w}_{X}(\widetilde{f}(h^l_j\ast k^l_j))-\widetilde{w}_{X}(h^l_j\ast k^l_j)}{2}.\nonumber
\end{eqnarray}
Since we are working in the projective tangent bundle 
and we assume that representatives of mapping classes fix a regular neighborhood 
of the boundary components, we get
\begin{eqnarray}
\psi^*_{\mathscr{K}}\widetilde{e}_{X}(\widetilde{f})[h^l_j] 
& = &\frac{\widetilde{w}_{X}(\widetilde{f}(h^l_j\ast k^l_j) \ast (h^l_j\ast k^l_j)^{-1})}{2}\nonumber \\
& = &\frac{\widetilde{w}_{X}(f(h^l_j)\ast k^{l}_j\ast (k^{l}_j)^{-1}\ast (h^l_j)^{-1})}{2} \nonumber\\
& = &\frac{\widetilde{w}_{X}(f(h^l_j)\ast (h^l_j)^{-1})}{2} \nonumber \\
& = &\frac{\widetilde{w}_Y(f(h^l_j)\ast (h^l_j)^{-1})}{2}.\nonumber
\end{eqnarray}

Therefore, we obtain the equality 
$\widetilde{e}_Y= \psi^*_{\mathscr{K}}\widetilde{e}_X i_*$. 
This concludes the proof.
\end{proof}

Note that commutativity of diagram~(\ref{eq:0111}) does not depend on the choice of basis $\{[k^{l}_j]\}\in H_1^{\mathcal{P}_l}(S_l; \mathbb{Z})$.

Proposition~\ref{prop:natural} and Proposition~\ref{prop:commut} imply the following theorem.

\begin{theorem}
\label{thm:natural}
Let $(\Sigma, \mathcal{P})$ and $(\Sigma', \mathcal{P}')$ be partitioned surfaces and
$i: (\Sigma, \mathcal{P}) \hookrightarrow (\Sigma', \mathcal{P}')$
be an embedding of partitioned surfaces. Let $X$ be a nonvanishing vector field 
on $\Sigma'$ and let $Y$ denote the restriction of $X$ to $\Sigma$. Then 
there exists a homomorphism $i'_\ast$ such that the homomorphism 
$\widetilde{e}_Y$ is natural in the sense that the diagram
\begin{equation}
\xymatrix{
 \mathcal{I}(\Sigma, \mathcal{P}) \ar[r]^{i_\ast} \ar[d]_{\widetilde{e}_Y} &
                                        \mathcal{I}(\Sigma', \mathcal{P}')\ar[d]^{\widetilde{e}_X} \\
 \textnormal{Hom} (H_1^{\mathcal{P}}(\Sigma; \mathbb{Z}), \mathbb{Z}) \ar[r]_{i'_\ast}                          & \textnormal{Hom} (H_1^{\mathcal{P}'}(\Sigma'; \mathbb{Z}), \mathbb{Z})} 
\label{eq:1}
\end{equation}
commutes.
\end{theorem}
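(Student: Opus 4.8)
The plan is to deduce Theorem~\ref{thm:natural} by factoring an arbitrary morphism
$i\colon (\Sigma,\mathcal P)\hookrightarrow(\Sigma',\mathcal P')$ of $\mathcal T\mathrm{Surf}$ through the minimal totally separated surfaces, and then to paste together the two commutative squares already established in Proposition~\ref{prop:commut} and Proposition~\ref{prop:natural}. First I would invoke the second condition of Definition~\ref{def:cat}: $i$ extends to an embedding $\widehat i\colon\widehat\Sigma\hookrightarrow\widehat{\Sigma'}$ of the minimal totally separated surfaces. This produces a commuting square of surfaces
\[
\xymatrix{
\Sigma \ar@{^{(}->}[r]^{i}\ar@{^{(}->}[d] & \Sigma' \ar@{^{(}->}[d]\\
\widehat\Sigma \ar@{^{(}->}[r]^{\widehat i} & \widehat{\Sigma'}
}
\]
in which the vertical maps are the canonical inclusions of Remark~\ref{rmk:totally}; moreover $\widehat i$ is a morphism of $\mathcal T\mathrm{Surf}$ between totally separated surfaces, so Proposition~\ref{prop:natural} applies to it. Choosing a nonvanishing vector field $X$ on $\Sigma'$, I extend it to a nonvanishing field (still called $X$) on $\widehat{\Sigma'}$, let $\widehat Y$ be its restriction to $\widehat\Sigma$, and let $Y$ be its restriction to $\Sigma$; note $\widehat Y$ restricts to $Y$ on $\Sigma$, so the hypotheses of both propositions are simultaneously met.

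Next I assemble the diagram. On the level of Torelli groups the square of surfaces gives a commuting square $\mathcal I(\Sigma,\mathcal P)\to\mathcal I(\widehat\Sigma,\widehat{\mathcal P})$, $\mathcal I(\Sigma,\mathcal P)\to\mathcal I(\Sigma',\mathcal P')$, etc., by functoriality of the subsurface Torelli group. Proposition~\ref{prop:commut} gives commutativity of the square relating $\widetilde e_Y$ and $\widetilde e_{\widehat Y}$ via $\psi^*_{\mathscr K}$ (for a choice of arcs $\mathscr K$ on $\widehat\Sigma\setminus\Sigma^\circ$), and likewise for $\Sigma'\hookrightarrow\widehat{\Sigma'}$ via $\psi^*_{\mathscr K'}$. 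Proposition~\ref{prop:natural} applied to $\widehat i$ gives commutativity of the square relating $\widetilde e_{\widehat Y}$ and $\widetilde e_X$ via the natural map $r^*$ built from the projection $r_*\colon H_1^{\widehat{\mathcal P'}}(\widehat{\Sigma'};\mathbb Z)\to H_1^{\widehat{\mathcal P}}(\widehat\Sigma;\mathbb Z)$. Stacking these three squares and inverting the two isomorphisms $\psi^*_{\mathscr K},\psi^*_{\mathscr K'}$ (legitimate since they are isomorphisms), I define
\[
i'_\ast \;:=\; (\psi^*_{\mathscr K})\circ r^* \circ (\psi^*_{\mathscr K'})^{-1}
\;\colon\; \textnormal{Hom}(H_1^{\mathcal P}(\Sigma;\mathbb Z),\mathbb Z)\to\textnormal{Hom}(H_1^{\mathcal P'}(\Sigma';\mathbb Z),\mathbb Z),
\]
which is a homomorphism as a composite of homomorphisms, and by construction the outer square~(\ref{eq:1}) commutes: chasing $f\in\mathcal I(\Sigma,\mathcal P)$ around, one has $i'_\ast\widetilde e_Y(f)=\psi^*_{\mathscr K}r^*(\psi^*_{\mathscr K'})^{-1}\widetilde e_Y(f)=\psi^*_{\mathscr K}r^*\widetilde e_{\widehat Y}(\widehat i_\ast\text{-image})=\psi^*_{\mathscr K}\widetilde e_{\widehat Y}(\cdots)$ and then down the other side $=\widetilde e_X(i_\ast f)$ after using that $\widehat{i_\ast f}$ as an element of $\mathcal I(\widehat{\Sigma'},\widehat{\mathcal P'})$ agrees with the image of $i_\ast f$ under $\Sigma'\hookrightarrow\widehat{\Sigma'}$.

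The main obstacle, and the step deserving the most care, is verifying that the square of surfaces genuinely commutes at the level of the subsurface Torelli groups and their homology — that is, that $\widehat i\circ(\Sigma\hookrightarrow\widehat\Sigma)$ and $(\Sigma'\hookrightarrow\widehat{\Sigma'})\circ i$ induce the same maps $\mathcal I(\Sigma,\mathcal P)\to\mathcal I(\widehat{\Sigma'},\widehat{\mathcal P'})$ and the same maps on $H_1^{(\cdot)}$. Concretely this requires checking that the extension $\widehat f$ of $i_\ast f$ over $\widehat{\Sigma'}$ (identity off $\Sigma'$) coincides with the extension of $f$ first over $\widehat\Sigma$ (identity off $\Sigma$) and then over $\widehat{\Sigma'}$ (identity off $\widehat\Sigma$); since in both cases the diffeomorphism is $f$ on $\Sigma$ and the identity on the complement of $\Sigma$ in $\widehat{\Sigma'}$, they agree, but one must confirm that $\widehat i$ can be chosen to respect the complementary regions $V_j$ so that the direct-sum decomposition of Remark~\ref{rmk:homology} is preserved and the projections $r_*$ compose correctly. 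A secondary, lesser point is checking independence of $i'_\ast$ from the auxiliary choices of $\mathscr K$ and $\mathscr K'$, which follows from the remark after Proposition~\ref{prop:commut} that commutativity of~(\ref{eq:0111}) is insensitive to those choices. Once these compatibilities are in hand, the theorem is a formal diagram chase.
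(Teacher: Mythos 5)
Your proposal is correct and follows essentially the same strategy as the paper: factor $i$ through a minimal totally separated surface and paste together the squares supplied by Proposition~\ref{prop:commut} and Proposition~\ref{prop:natural}. The only real difference is the routing. You pass through both $\widehat{\Sigma}$ and $\widehat{\Sigma'}$, invoking Proposition~\ref{prop:commut} twice (once on each side) and Proposition~\ref{prop:natural} once for the extension $\widehat{i}\colon \widehat{\Sigma}\hookrightarrow\widehat{\Sigma'}$ guaranteed by Definition~\ref{def:cat}; the paper instead builds $\widehat{\Sigma}$ directly \emph{inside} $\Sigma'$, as $\Sigma$ together with closed tubular neighbourhoods of $\partial^l_1\cup\cdots\cup\partial^l_{n_l}\cup k^l_1\cup\cdots\cup k^l_{n_l-1}$ (each a sphere $S_l$ with $n_l+1$ holes), so that $i$ factors as $\Sigma\hookrightarrow\widehat{\Sigma}\hookrightarrow\Sigma'$ with the second map already having totally separated source, and only two squares are needed, giving $i'_\ast=r^*\circ(\psi^*_{\mathscr K})^{-1}$. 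Your variant spares you the explicit construction of $\widehat{\Sigma}$ inside $\Sigma'$ but costs an extra square, requires checking that $\widehat{i}$ is itself a morphism of $\mathcal{T}\mathrm{Surf}$ so that Proposition~\ref{prop:natural} applies to it, and needs the observation that the nonvanishing field $X$ extends from $\Sigma'$ over $\widehat{\Sigma'}$ (which holds because each attached sphere with holes deformation retracts, rel the glued boundary circles, onto a $1$-complex). One concrete slip: the subscripts in your formula are transposed; since $(\psi^*_{\mathscr K})^{-1}$ is the map out of $\textnormal{Hom}(H_1^{\mathcal P}(\Sigma;\mathbb Z),\mathbb Z)$ and $\psi^*_{\mathscr K'}$ the map into $\textnormal{Hom}(H_1^{\mathcal P'}(\Sigma';\mathbb Z),\mathbb Z)$, it should read $i'_\ast=\psi^*_{\mathscr K'}\circ r^*\circ(\psi^*_{\mathscr K})^{-1}$.
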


\begin{proof} 
Let $\mathcal{P}=\{P_1,P_2,\ldots,P_n\}$, $|P_l|=n_l$ be the partition on $\Sigma$. 
For an $l=1,2,\ldots,n$, suppose that 
$P_l=\{ \partial^l_1,\partial^l_2,\ldots,\partial^l_{n_l}\}$. For each $j=1,2,\ldots,{{n_l}-1}$,
choose smooth oriented simple arcs $k^l_j$ on the complement $\Sigma'\setminus \Sigma^{\circ}$ 
connecting $\mathcal Q\cap\partial^l_j$ to
$\mathcal{Q}\cap\partial^l_{j+1}$. Here, $k^l_j$ are pairwise disjoint except perhaps 
at endpoints. We consider a closed tubular neighbourhood of the union
$\partial^l_1\cup\partial^l_2\cup\cdots\cup\partial^l_{n_l}\cup k^l_1\cup\cdots k^l_{n_l-1}$. 
This tubular neighbourhood is homeomorphic to a sphere $S_l$ with $n_l+1$ holes.
Let us consider now a minimal totally separated surface $(\widehat{\Sigma}, \widehat{\mathcal{P}})$ containing $\Sigma$ and all $S_l$ as a subsurface.

Let us fix bases $\mathscr{B}$ and $\mathscr{K}$
as in Proposition~\ref{prop:commut}.

Consider the composition of the embedding 
$\widehat{j}: (\Sigma, \mathcal{P})\hookrightarrow(\widehat{\Sigma}, \widehat{\mathcal{P}})$ 
of partitioned surfaces with the embedding 
$j': (\widehat{\Sigma}, \widehat{\mathcal{P}})\hookrightarrow (\Sigma', \mathcal{P}')$ 
of partitioned surfaces. Let $\widehat{Y}$ denote the restriction of $X$ to $\widehat{\Sigma}$.
After showing that both diagrams in (\ref{eq:3}) are commutative, our proof 
will be complete.
\begin{equation}
\xymatrix{
 \mathcal{I}(\Sigma, \mathcal{P}) \ar[r]^{\widehat{j}_\ast} \ar[d]_{\widetilde{e}_Y} & \mathcal{I}(\widehat{\Sigma}, \widehat{\mathcal{P}})\ar[r]^{j'_\ast} \ar[d]^{\widetilde{e}_{\widehat{Y}}} 
                                                                 & \mathcal{I}(\Sigma',\mathcal{P}')\ar[d]^{\widetilde{e}_X}\\
\text{Hom}(H_1^{\mathcal{P}}(\Sigma; \mathbb{Z}), \mathbb{Z})\ar[r]_{(\psi^*_{\mathscr{K}})^{-1}}        & \text{Hom}(H_1^{\widehat{P}}(\widehat{\Sigma}; \mathbb{Z}), \mathbb{Z})\ar[r]_{r^*}  & \text{Hom}(H_1^{\mathcal{P}'}(\Sigma'; \mathbb{Z}), \mathbb{Z}) }
\label{eq:3}                                                                                                                                                              
\end{equation}
Proposition~\ref{prop:natural} implies the commutativity of the 
right-hand side in diagram~(\ref{eq:3}). Proposition~\ref{prop:commut} gives the commutativity of the left-hand side in diagram~(\ref{eq:3}).

\end{proof}

\begin{remark} 
Theorem~\ref{thm:natural} remains true for any capping
$i:(\Sigma, \mathcal{P})\hookrightarrow \Sigma_g$ under the condition that 
the chosen vector field $X$ on $\Sigma_{g}$ has only one singularity in the 
complement of $\widehat{\Sigma}$.
\end{remark}

\begin{proposition}
\label{pro:unique} 
The homomorphism $\widetilde{e}_Y$ is unique in the sense that it is the 
only nontrivial homomorphism from $\mathcal{I}(\Sigma, \mathcal{P})$ to $\textnormal{Hom} (H_1^{\mathcal{P}}(\Sigma; \mathbb{Z}), \mathbb{Z})$ such that diagram~(\ref{eq:1}) commutes.
\end{proposition}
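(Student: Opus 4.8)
The plan is to reduce uniqueness to the classical uniqueness of the Chillingworth/Johnson homomorphism on the closed-surface (or totally separated) Torelli group, using naturality as the rigidity mechanism. Suppose $\varphi_{(\Sigma,\mathcal P)}\colon \mathcal I(\Sigma,\mathcal P)\to \textnormal{Hom}(H_1^{\mathcal P}(\Sigma;\mathbb Z),\mathbb Z)$ is any assignment, natural in the sense that diagram~(\ref{eq:1}) commutes for every embedding $i$ of partitioned surfaces (with $\varphi$ in place of $\widetilde e$), and that $\varphi$ is not identically zero. I want to show $\varphi = \widetilde e_X$ for the appropriate choice of vector field. The first step is to exploit functoriality to embed any $(\Sigma,\mathcal P)$ into its minimal totally separated surface $(\widehat\Sigma,\widehat{\mathcal P})$ and then further into a closed surface $\Sigma_g$ (a capping). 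By Theorem~\ref{thm:natural} the value of $\varphi$ on a generating twist of $\mathcal I(\Sigma,\mathcal P)$ is determined by its image under $i_\ast$ in $\mathcal I(\Sigma_g)$ together with the map $i'_\ast$ on Hom-groups; since $i'_\ast$ is built from the explicit isomorphisms $\psi^*_{\mathscr K}$ and the split projection $r^*$, which are injective on the relevant summand, it suffices to pin down $\varphi_{\Sigma_g}$ on $\mathcal I(\Sigma_g)$.

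The second step is the closed-surface case. Here $\varphi_{\Sigma_g}\colon \mathcal I(\Sigma_g)\to H^1(\Sigma_g;\mathbb Z)\cong\textnormal{Hom}(H_1(\Sigma_g;\mathbb Z),\mathbb Z)$ is a homomorphism that is natural with respect to all subsurface inclusions. One evaluates $\varphi_{\Sigma_g}$ on the standard generators — Dehn twists about separating curves and bounding-pair maps — by restricting to the genus-one and genus-two subsurfaces they are supported on; naturality forces $\varphi$ on such a generator to be the pushforward of its value on a small subsurface. For a bounding pair map $T_{\gamma_1}T_{\gamma_2}^{-1}$ supported in $\Sigma_{1,2}$, the target $\textnormal{Hom}(H_1^{\mathcal P}(\Sigma_{1,2});\mathbb Z)$ is a free abelian group of low rank, the Torelli group of $\Sigma_{1,2}$ is infinite cyclic generated by that bounding pair map, and a nontrivial natural homomorphism is determined up to an overall integer scalar; the same scaling must be used on every subsurface by naturality applied to inclusions $\Sigma_{1,2}\hookrightarrow\Sigma_{1,2}$ twisted by the mapping class group, which fixes the scalar to be $\pm 1$ times the one realized by $\widetilde e_X$. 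Combining the bounding-pair computation with the separating-twist computation (which must vanish, since a separating twist is trivial in $H_1^{\mathcal P}$ of any totally separated surface it embeds compatibly into, forcing $\varphi$ to vanish on it by naturality) shows $\varphi_{\Sigma_g}$ agrees with $\widetilde e_X$ on a generating set, hence everywhere.

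The third step is to transport this back: for a general $(\Sigma,\mathcal P)$, choose a capping into some $\Sigma_g$ as above, and use that the Hom-level map $i'_\ast$ appearing in Theorem~\ref{thm:natural} is injective (it is $r^*\circ(\psi^*_{\mathscr K})^{-1}$ precomposed with the inclusion of a direct summand, hence split injective) to conclude $\varphi_{(\Sigma,\mathcal P)} = \widetilde e_X$ from the already-established equality $\varphi_{\Sigma_g} = \widetilde e_X$ and the commuting square. Nontriviality of $\varphi$ is what rules out the scalar $0$; it propagates to every surface because any $(\Sigma,\mathcal P)$ with $\mathcal I(\Sigma,\mathcal P)\neq 1$ receives a compatible embedding from — and maps compatibly into — a surface on which $\widetilde e_X$ is already known to be nonzero, and naturality forces the same scalar everywhere.

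The main obstacle I expect is the genus-one base case: one must argue carefully that, on $\Sigma_{1,2}$ (or $\Sigma_{1,1}$), the only natural homomorphisms $\mathcal I \to \textnormal{Hom}(H_1^{\mathcal P};\mathbb Z)$ form a rank-one lattice and that naturality with respect to the mapping-class-group symmetry (acting by conjugation on $\mathcal I$ and linearly on $H_1^{\mathcal P}$) forces equivariance, which rigidifies the scalar. This is where one genuinely uses that the image of $\widetilde e_X$ dualizes to the Chillingworth class and hence, via the factorization through the partitioned Johnson homomorphism of \cite{Church}, is nonzero and $\textnormal{Sp}$-equivariant; any competing $\varphi$ must share this equivariance and therefore be a scalar multiple, and the scalar is fixed by its value on one explicit bounding-pair map.
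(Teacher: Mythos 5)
You are proving a different (and much harder) statement than the one the proposition actually asserts. In diagram~(\ref{eq:1}) the right-hand vertical arrow $\widetilde{e}_X$ on $\mathcal{I}(\Sigma',\mathcal{P}')$ is \emph{fixed}; the claim is only that the left-hand vertical arrow is the unique homomorphism $F\colon \mathcal{I}(\Sigma,\mathcal{P})\to \textnormal{Hom}(H_1^{\mathcal{P}}(\Sigma;\mathbb{Z}),\mathbb{Z})$ with $i'_\ast\circ F=\widetilde{e}_X\circ i_\ast$. Since $i'_\ast=r^\ast\circ(\psi^\ast_{\mathscr{K}})^{-1}$, where $\psi^\ast_{\mathscr{K}}$ is an isomorphism and $r^\ast$ is injective (it is precomposition with the surjection $r_\ast$), the map $i'_\ast$ is injective and uniqueness is immediate: $i'_\ast\circ F=i'_\ast\circ\widetilde{e}_Y$ forces $F=\widetilde{e}_Y$. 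This two-line diagram chase, carried out separately on the two squares of diagram~(\ref{eq:3}), is the paper's entire proof; no computation on generators, no reduction to closed surfaces, and in fact no use of nontriviality is needed.

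The statement you set out to prove --- that $\widetilde{e}$ is the only nontrivial assignment $\varphi_{(\Sigma,\mathcal{P})}$ that is natural with $\varphi$ placed on \emph{both} vertical arrows --- is false as you formulate it: naturality is preserved under multiplication by any integer, so $n\widetilde{e}$ is a nontrivial natural family for every $n\neq 0$. Your own text concedes this ("determined up to an overall integer scalar", "$\pm 1$ times the one realized by $\widetilde{e}_X$") and never closes the gap; nontriviality only excludes the scalar $0$. Beyond that, the base case rests on unverified assertions (that $\mathcal{I}(\Sigma_{1,2},\mathcal{P})$ is infinite cyclic generated by a bounding-pair map, that the natural homomorphisms on it form a rank-one lattice, that naturality forces $\textnormal{Sp}$-equivariance), none of which are established in the paper or supplied by you. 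If you instead read the proposition as the paper intends --- uniqueness of the lift of the already-constructed $\widetilde{e}_X$ through the injective map $i'_\ast$ --- the whole apparatus of steps two and three becomes unnecessary.
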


\begin{proof} 
Let $\widehat{\Sigma}$ be a totally separated surface obtained 
from $\Sigma$ as in Remark~\ref{rmk:totally}. Since an embedding 
$i:(\Sigma, \mathcal{P})\hookrightarrow (\Sigma', \mathcal{P}')$ of partitioned 
surfaces can be considered to be the composition of the two embeddings 
$(\Sigma, \mathcal{P})\hookrightarrow (\widehat{\Sigma}, \widehat{\mathcal{P}})\hookrightarrow (\Sigma', \mathcal{P}')$ of partitioned surfaces, we will consider the diagram~(\ref{eq:3}).

First, we show the uniqueness of 
$\widetilde{e}_{\widehat{Y}}$ such that the right side of diagram (\ref{eq:3}) 
is commutative. The second step will be to show the uniqueness of $\widetilde{e}_Y$ 
such that the left side of diagram (\ref{eq:3}) is commutative. This will finish our proof. 

Now let us consider the embedding $(\widehat{\Sigma}, \widehat{\mathcal{P}})\hookrightarrow (\Sigma', \mathcal{P}')$ of partitioned surfaces. We have $r^*\circ \widetilde{e}_{\widehat{Y}}= \widetilde{e}_X\circ j'_{\ast}$. 
Let us assume that there is another homomorphism $G:\mathcal{I}(\widehat{\Sigma}, \widehat{\mathcal{P}})\rightarrow \text{Hom}(H_1^{\widehat{\mathcal{P}}}(\widehat{\Sigma}; \mathbb{Z}), \mathbb{Z})$ 
satisfying the naturality condition, $r^*\circ G= \widetilde{e}_X\circ j'_{\ast}$. 
Our aim is to show that $\widetilde{e}_{\widehat{Y}}= G$, hence proving the 
proposition in this case. Since both $G$ and $\widetilde{e}_{\widehat{Y}}$ 
satisfy the naturality condition, we get $r^*\circ \widetilde{e}_{\widehat{Y}}= r^*\circ G$. 
Since $r_{\ast}$ is onto, $r^*$ is injective, which implies that $\widetilde{e}_{\widehat{Y}}= G$. 

Now consider the embedding $(\Sigma, \mathcal{P})\hookrightarrow (\widehat{\Sigma}, \widehat{P})$ 
of partitioned surfaces for the second part of the proof. We need to show that 
$\widetilde{e}_Y:\mathcal{I}(\Sigma, \mathcal{P})\rightarrow \text{Hom}(H_1^{\mathcal{P}}(\Sigma; \mathbb{Z}), \mathbb{Z})$ is the unique homomorphism satisfying 
the naturality property. Let $F: \mathcal{I}(\Sigma, \mathcal{P})\rightarrow \text{Hom}(H_1^{\mathcal{P}}(\Sigma; \mathbb{Z}), \mathbb{Z})$ be another homomorphism such that 
$\widetilde{e}_{\widehat{Y}}\circ \widehat{j}_{\ast}= (\psi^*_{\mathscr{K}})^{-1}\circ F$.
Recall that $(\psi^*_{\mathscr{K}})^{-1}: \text{Hom}(H_1^{\mathcal{P}}(\Sigma; \mathbb{Z}), \mathbb{Z})\rightarrow \text{Hom}(H_1^{\widehat{\mathcal{P}}}(\widehat{\Sigma}; \mathbb{Z}), \mathbb{Z})$ is 
defined such that 
$(\psi^*_{\mathscr{K}})^{-1}(\chi)= \chi\circ\psi^{-1}_{\mathscr{K}}$ for any 
$\chi\in \text{Hom}(H_1^{\mathcal{P}}(\Sigma; \mathbb{Z}), \mathbb{Z})$. Observe that 
$(\psi^*_{\mathscr{K}})^{-1}$ is an isomorphism because 
$\psi_{\mathscr{K}}$ is an isomorphism. 
Hence by composing both sides of 
$(\psi^*_{\mathscr{K}})^{-1}\circ \widetilde{e}_Y=(\psi^*_{\mathscr{K}})^{-1}\circ F$ 
with $\psi^*_{\mathscr{K}}$, we get the equality $\widetilde{e}_Y= F$. 

This finishes the proof.
\end{proof}

\subsection{Naturality of the Chillingworth Homomorphism}
\label{section:naturality2}
In this section, we show that the Chillingworth homomorphism is natural. 
We relate the Chillingworth class of the subsurface Torelli 
group to the partitioned Johnson homomorphism.

For an element $f\in \mathcal{I}(\Sigma, \mathcal{P})$, let us define the dual of 
$\widetilde{e}_Y(f)$. This will be called the Chillingworth class of $f$. The algebraic intersection form for 
$H_1^{\mathcal{P}}(\Sigma; \mathbb{Z})$ gives $t_{(\Sigma, \mathcal{P})}(f)$ defined by: 
\[
\widehat{i}([\gamma], t_{(\Sigma, \mathcal{P})}(f))= \widetilde{e}_Y(f)[\gamma].
\]
Therefore, we get the Chillingworth homomorphism:
\[
t_{(\Sigma, \mathcal{P})}: \mathcal{I}(\Sigma, \mathcal{P})\rightarrow H_1^{\mathcal{P}}(\Sigma; \mathbb{Z}).
\]

Let $(\Sigma, \mathcal{P})\hookrightarrow (\Sigma', \mathcal{P}')$ be 
an embedding of partitioned surfaces. Fix a symplectic basis $\mathscr{B}$ of
$H_1^{\mathcal{P}}(\Sigma; \mathbb{Z})$ defined in Section~\ref{section:symplecticbasis}. 
Recall that $H_1^{\mathcal{P}'}(\Sigma'; \mathbb{Z})$ 
is isomorphic to $H_1^{\widehat{\mathcal{P}}}(\widehat{\Sigma}; \mathbb{Z})\oplus H_1^{\mathcal{P}_1}(V_1; \mathbb{Z}) \oplus H_1^{\mathcal{P}_2}(V_2; \mathbb{Z})\oplus\cdots\oplus H_1^{\mathcal{P}_n}(V_n; \mathbb{Z})$ as in Remark~\ref{rmk:homology}. 

As in the previous section, take a nonvanishing vector field $X$ on $\Sigma'$. Restrict $X$ to the subsurface $\Sigma$ and call the restriction $Y$.

\begin{lemma} 
\label{lem:commutativity}
Let $s_{\ast}: H_1^{\widehat{\mathcal{P}}}(\widehat{\Sigma}; \mathbb{Z})\rightarrow H_1^{\mathcal{P}'}(\Sigma'; \mathbb{Z})$ be the inclusion map and $D$ be the 
isomorphism defined in Section~\ref{section:symplecticbasis}. 
Then the following diagram commutes:
\begin{equation}
\xymatrix{
 \mathcal{I}(\Sigma, \mathcal{P}) \ar[r]^{i_\ast} \ar[d]_{\widetilde{e}_Y} &
                                        \mathcal{I}(\Sigma', \mathcal{P}')\ar[d]^{\widetilde{e}_X} \\
 \textnormal{Hom} (H_1^{\mathcal{P}}(\Sigma; \mathbb{Z}), \mathbb{Z}) \ar[r]^{i'_\ast} \ar[d]^{D^{-1}}
                  & \textnormal{Hom} (H_1^{\mathcal{P}'}(\Sigma'; \mathbb{Z}), \mathbb{Z}) \ar[d]^{D^{-1}} \\
 H_1^{\mathcal{P}}(\Sigma; \mathbb{Z}) \ar[r]^{s_{\ast}\circ\psi_{\mathscr{K}}}                      &H_1^{\mathcal{P}'}(\Sigma'; \mathbb{Z})} 
 \label{eq:4}
\end{equation}
\end{lemma}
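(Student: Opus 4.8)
The plan is to reduce the claim to the combination of the naturality results already established, together with the compatibility between the dualizing isomorphisms $D$ on the two surfaces and the map $\psi_{\mathscr{K}}$ on homology. Concretely, diagram~(\ref{eq:4}) has two stacked squares: the top square is exactly diagram~(\ref{eq:1}) of Theorem~\ref{thm:natural}, so it commutes with $i'_\ast$ equal to the composite $r^*\circ(\psi^*_{\mathscr{K}})^{-1}$ arising from the factorization $(\Sigma,\mathcal{P})\hookrightarrow(\widehat{\Sigma},\widehat{\mathcal{P}})\hookrightarrow(\Sigma',\mathcal{P}')$ used there. So the only thing left to prove is commutativity of the bottom square, i.e. that
\[
D^{-1}\circ i'_\ast = (s_\ast\circ\psi_{\mathscr{K}})\circ D^{-1}
\]
as maps $\textnormal{Hom}(H_1^{\mathcal{P}}(\Sigma;\mathbb{Z}),\mathbb{Z})\to H_1^{\mathcal{P}'}(\Sigma';\mathbb{Z})$, or equivalently $i'_\ast\circ D = D\circ s_\ast\circ\psi_{\mathscr{K}}$.

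First I would unwind the definitions of the two dualizing maps. On $\Sigma$, $D$ sends $[x]$ to $\widehat{i}(\cdot,[x])$ with respect to the nondegenerate intersection form on $H_1^{\mathcal{P}}(\Sigma;\mathbb{Z})$; on $\Sigma'$ the map $D$ is the analogous dualization with respect to the intersection form on $H_1^{\mathcal{P}'}(\Sigma';\mathbb{Z})$. Then I would trace an arbitrary basis element $[b]\in\mathscr{B}$ around the bottom square. Going one way, $D([b])=\widehat{i}_\Sigma(\cdot,[b])$, and then $i'_\ast$ precomposes with $r_*\circ(\psi_{\mathscr{K}})^{-1}$ (the transpose of $i'_\ast=r^*\circ(\psi^*_{\mathscr{K}})^{-1}$, recalling $\psi^*_{\mathscr{K}}(\chi)=\chi\circ\psi_{\mathscr{K}}$ and $r^*(\chi)=\chi\circ r_*$). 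Going the other way, we land at $s_\ast(\psi_{\mathscr{K}}([b]))\in H_1^{\mathcal{P}'}(\Sigma';\mathbb{Z})$ and then apply $D$ to get $\widehat{i}_{\Sigma'}(\cdot, s_\ast\psi_{\mathscr{K}}([b]))$. So the identity to check is
\[
\widehat{i}_{\Sigma'}\bigl(c,\ s_\ast\psi_{\mathscr{K}}([b])\bigr)=\widehat{i}_\Sigma\bigl(r_*(\psi_{\mathscr{K}})^{-1}(c),\ [b]\bigr)\quad\text{for all }c\in H_1^{\mathcal{P}'}(\Sigma';\mathbb{Z}).
\]
Using the direct sum decomposition of Remark~\ref{rmk:homology}, it suffices to verify this on each summand: on the $H_1^{\widehat{\mathcal{P}}}(\widehat{\Sigma};\mathbb{Z})$ summand $(\psi_{\mathscr{K}})^{-1}$ is an isomorphism onto $H_1^{\mathcal{P}}(\Sigma;\mathbb{Z})$ and the equation becomes the statement that $s_\ast$ and $\psi_{\mathscr{K}}$ preserve intersection numbers, which holds because intersection number is a topological invariant preserved by embeddings and because $\psi_{\mathscr{K}}$ is defined by capping off arcs with arcs $k^l_j$ in the complement $\widehat{\Sigma}\setminus\Sigma^\circ$ (these caps are disjoint from everything in $\Sigma$, so they do not change intersection numbers). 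On each $H_1^{\mathcal{P}_j}(V_j;\mathbb{Z})$ summand, $r_*$ kills the class, so the right-hand side is $0$; on the left-hand side the chosen symplectic basis $\mathscr{B}$ consists of classes supported in $\Sigma$, so $s_\ast\psi_{\mathscr{K}}([b])$ can be represented disjointly from $V_j$, forcing the intersection number to vanish. This establishes the identity on all of $H_1^{\mathcal{P}'}(\Sigma';\mathbb{Z})$.

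The main obstacle — and the place requiring care rather than routine bookkeeping — is the arc case: justifying that $\widehat{i}_{\Sigma'}(c, s_\ast\psi_{\mathscr{K}}([h^l_j]))=\widehat{i}_\Sigma(r_*(\psi_{\mathscr{K}})^{-1}(c),[h^l_j])$ when $[h^l_j]$ is an arc class. Here one must be careful about the convention, recorded in Section~\ref{section:symplecticbasis}, that intersection numbers of arcs sharing endpoints on $\partial\Sigma$ are taken to be $0$, and that $\psi_{\mathscr{K}}$ replaces $[h^l_j]$ by the closed curve $[h^l_j\ast k^l_j]$; I would check that the pairing of $[h^l_j\ast k^l_j]$ with the closed-curve and boundary basis elements agrees with the pairing of $[h^l_j]$ with the corresponding elements of $\mathscr{B}$, using the listed intersection properties of $\mathscr{B}$ (in particular $\widehat{i}([h^l_i],[\partial^l_1+\cdots+\partial^l_j])=\delta_{ij}$) and the fact that $k^l_j$ lies in the complement of $\Sigma$ so contributes nothing to intersections with curves in $\Sigma$. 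Once the identity is verified on the basis $\mathscr{B}$ and extended by bilinearity, both squares commute and hence the whole diagram~(\ref{eq:4}) commutes, completing the proof.
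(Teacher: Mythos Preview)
Your proposal is correct and follows essentially the same route as the paper's proof: invoke Theorem~\ref{thm:natural} for the top square, then verify the bottom square by checking that the intersection-form duality $D$ intertwines $i'_\ast=r^*\circ(\psi^*_{\mathscr{K}})^{-1}$ with $s_\ast\circ\psi_{\mathscr{K}}$, splitting into the $\widehat{\Sigma}$-summand (where $\psi_{\mathscr{K}}$ preserves intersection numbers) and the $V_j$-summands (where both sides vanish). The paper merely organizes this via an intermediate three-column diagram~(\ref{eq:5}) factoring through $H_1^{\widehat{\mathcal{P}}}(\widehat{\Sigma};\mathbb{Z})$, but the content is identical; note only that your displayed precomposition should read $\psi_{\mathscr{K}}^{-1}\circ r_*$ rather than $r_*\circ(\psi_{\mathscr{K}})^{-1}$, which is indeed what your subsequent summand-by-summand verification actually uses.
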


\begin{proof}
We showed in Theorem \ref{thm:natural} that the upper square in 
diagram~(\ref{eq:4}) commutes. Hence our aim is to show that the lower 
square also commutes. Here, the image of 
$i'_{\ast}: \text{Hom}(H_1^{\mathcal{P}}(\Sigma; \mathbb{Z}), \mathbb{Z})\rightarrow \text{Hom}(H_1^{\mathcal{P}'}(\Sigma'; \mathbb{Z}), \mathbb{Z})$ 
is defined as the composition of $(\psi^*_{\mathscr{K}})^{-1}$ and $r^*$ as before. 
Let $\chi\in \text{Hom}(H_1^{\mathcal{P}}(\Sigma; \mathbb{Z}), \mathbb{Z})$. 
Then $i'_{\ast}(\chi)[\gamma]= \chi(\psi^{-1}_{\mathscr{K}} r_{\ast})[\gamma]$ for any 
$[\gamma]\in H_1^{\mathcal{P}'}(\Sigma'; \mathbb{Z})$. Recall that $r_{\ast}$ 
is the projection of $H_1^{\mathcal{P}'}(\Sigma'; \mathbb{Z})$ on $H_1^{\widehat{\mathcal{P}}}(\widehat{\Sigma}; \mathbb{Z})$. Commutativity of the lower square is 
proven by showing commutativity of diagram~(\ref{eq:5}).
\begin{equation}
 \xymatrix{
 \text{Hom} (H_1^{\mathcal{P}}(\Sigma; \mathbb{Z}), \mathbb{Z}) \ar[r]^{(\psi^*_{\mathscr{K}})^{-1}} & \text{Hom} (H_1^{\widehat{\mathcal{P}}}(\widehat{\Sigma}; \mathbb{Z}), \mathbb{Z}) \ar[r]^{r^{\ast}} 
                                            & \text{Hom} (H_1^{\mathcal{P}'}(\Sigma'; \mathbb{Z}),   \mathbb{Z})\\
 H_1^{\mathcal{P}}(\Sigma; \mathbb{Z}) \ar[r]^{\psi_{\mathscr{K}}} \ar[u]^{D}     &H_1^{\widehat{\mathcal{P}}}(\widehat{\Sigma}; \mathbb{Z}) \ar[r]^{s_{\ast}} \ar[u]^{D} & H_1^{\mathcal{P}'}(\Sigma'; \mathbb{Z}) \ar[u]^{D}} 
 \label{eq:5}
\end{equation} 

We will analyze $2$ cases for the square on the left of diagram (\ref{eq:5}). 

By definition, $\psi^{-1}_{\mathscr{K}}: H_1^{\widehat{\mathcal{P}}}(\widehat{\Sigma}; \mathbb{Z})\rightarrow H_1^{\mathcal{P}}(\Sigma; \mathbb{Z})$ 
preserves the algebraic intersection form, 
i.e. for any $a, b\in H_1^{\widehat{\mathcal{P}}}(\widehat{\Sigma}; \mathbb{Z})$ 
we have 
$\widehat{i}(a, b)= \widehat{i}(\psi^{-1}_{\mathscr{K}}(a), \psi^{-1}_{\mathscr{K}}(b))$.

\underline{Case $1$}: 
For any homology class $[x]$ of 
$H_1^{\mathcal{P}}(\Sigma; \mathbb{Z})$ with a closed curve representative, we have 
\begin{eqnarray}\label{eq:001}
(\psi^{\ast}_{\mathscr{K}})^{-1}(D([x]))[\gamma]= D([x])(\psi^{-1}_{\mathscr{K}}([\gamma]))= \widehat{i}(\psi^{-1}_{\mathscr{K}}[\gamma], [x])
\end{eqnarray}
and 
\begin{eqnarray}\label{eq:002}
D(\psi_{\mathscr{K}}([x]))[\gamma]= D([x])([\gamma])= \widehat{i}([\gamma], [x]).
\end{eqnarray}

For simplicity, $[x]$ denotes both an element in 
$H_1^{\mathcal{P}}(\Sigma; \mathbb{Z})$ and its image under the isomorphism 
$\psi_{\mathscr{K}}$. In (\ref{eq:001}), $[x]$ is an element of 
$H_1^{\mathcal{P}}(\Sigma; \mathbb{Z})$, whereas in (\ref{eq:002}), 
$[x]$ is an element of 
$H_1^{\widehat{\mathcal{P}}}(\widehat{\Sigma}; \mathbb{Z})$. 
Hence, we can consider $[x]$ in (\ref{eq:001}) to be 
$\psi^{-1}_{\mathscr{K}}([x])$. 
The following gives the commutativity for $[x]$:
\begin{eqnarray}
(\psi^{\ast}_{\mathscr{K}})^{-1}(D([x]))[\gamma]
& = & \widehat{i}(\psi^{-1}_{\mathscr{K}}[\gamma], [x]) \nonumber \\
& = & \widehat{i}(\psi^{-1}_{\mathscr{K}}[\gamma], \psi^{-1}_{\mathscr{K}}([x])) \nonumber \\
& = & \widehat{i}([\gamma], [x]) \nonumber \\
& = & D(\psi_{\mathscr{K}}([x]))[\gamma] \nonumber
\end{eqnarray}
for all $[\gamma]$ in $H_1^{\widehat{\mathcal{P}}}(\widehat{\Sigma}; \mathbb{Z})$.

\underline{Case $2$}: 
For the basis elements $[h_j^l]$ of $H_1^{\mathcal{P}}(\Sigma; \mathbb{Z})$, 
we will show that left-hand side of diagram (\ref{eq:5}) commutes. We have
\begin{eqnarray}\label{eq:003}
(\psi^{\ast}_{\mathscr{K}})^{-1}(D([h_j^l]))[\gamma]= D([h_j^l])(\psi^{-1}_{\mathscr{K}}([\gamma]))= \widehat{i}(\psi^{-1}_{\mathscr{K}}[\gamma], [h_j^l])
\end{eqnarray}
and 
\begin{eqnarray}\label{eq:004}
D(\psi_{\mathscr{K}}([h_j^l]))[\gamma]= D([h_j^l\ast k_j^l])([\gamma])= \widehat{i}([\gamma], [h_j^l\ast k_j^l]).
\end{eqnarray}
As in Case $1$, the last term in (\ref{eq:003}) denotes the algebraic 
intersection number in $H_1^{\mathcal{P}}(\Sigma; \mathbb{Z})$ and the last term 
in (\ref{eq:004}) denotes the algebraic intersection number in 
$H_1^{\widehat{\mathcal{P}}}(\widehat{\Sigma}; \mathbb{Z})$. 

By the same reasoning as in the previous case, 
\[
\widehat{i}(\psi^{-1}_{\mathscr{K}}[\gamma], [h_j^l])= \widehat{i}(\psi^{-1}_{\mathscr{K}}[\gamma], \psi^{-1}_{\mathscr{K}}([h_j^l\ast k_j^l]))=  \widehat{i}([\gamma], [h_j^l\ast k_j^l])
\]
for all $[\gamma]$ in $H_1^{\widehat{\mathcal{P}}}(\widehat{\Sigma}; \mathbb{Z})$.

Finally, for the left-hand side of diagram (\ref{eq:5}) we have 
$(\psi^{\ast}_{\mathscr{K}})^{-1}\circ D= \psi_{\mathscr{K}}\circ D$. 
This shows commutativity for the 
left-hand side of diagram (\ref{eq:5}).

Now our aim is to show that the square in the right-hand side of 
diagram (\ref{eq:5}) commutes. Recall that $s_{\ast}: H_1^{\widehat{\mathcal{P}}}(\widehat{\Sigma}; \mathbb{Z})\rightarrow H_1^{\mathcal{P}'}(\Sigma'; \mathbb{Z})$ is the inclusion map.

Let $[x]$ be an element of $H_1^{\widehat{\mathcal{P}}}(\widehat{\Sigma}; \mathbb{Z})$. 
For any homology basis element $[\gamma]\in H_1^{\mathcal{P}'}(\Sigma'; \mathbb{Z})$ 
the lemma follows:
\[
r^{\ast}(D([x]))[\gamma]= D([x])(r_{\ast}([\gamma]))=\widehat{i}(r_{\ast}([\gamma]), [x])
\]
\[
D(s_{\ast}([x]))[\gamma]= D([x])([\gamma])= \widehat{i}([\gamma], [x]).
\]  
If a representative of $[\gamma]$ is contained in the complement of 
$\widehat{\Sigma}$, $r_{\ast}([\gamma])=0$. Hence 
$r^{\ast}(D([x]))[\gamma]=0$. 
Since a representative of $[x]$ is contained in $\widehat{\Sigma}$, 
$D(s_{\ast}([x]))[\gamma]=0$ 
is obtained. 

If a representative of $[\gamma]$ is contained in $\widehat{\Sigma}$, 
$r_{\ast}([\gamma])=[\gamma]$ and so 
$r^{\ast}(D([x]))[\gamma]= D(s_{\ast}([x]))[\gamma]$ 
as desired.

Consequently, we have proven that diagram (\ref{eq:5}) commutes. Since the dual 
maps $D$ are isomorphisms, we obtain that diagram (\ref{eq:4}) is also commutative.
We conclude that $s_{\ast}\circ\psi_{\mathscr{K}}\circ t_{(\Sigma, \mathcal{P})}=t_{(\Sigma', \mathcal{P}')}\circ i_{\ast}$ 
by diagram (\ref{eq:4}).
\end{proof}

\begin{corollary}
\label{cor:bigdiagram} 
The following diagram is commutative and hence we get the following equality: 
$t_{(\Sigma,\mathcal{P})}=\psi^{-1}_{\mathscr{K}}\circ r_{\ast}\circ C\circ p_i\circ \tau_{(\Sigma, \mathcal{P})}$, 
where $C$ is the contraction map. Here, $\tau_{(\Sigma, \mathcal{P})}$ is 
the partitioned Johnson homomorphism defined in \cite{Church}, Definition $5.2$, and
$p_i$ is the map defined in \cite{Church}, Definition $5.13$.
\begin{equation}
\xymatrix{
  W_{(\Sigma, \mathcal{P})}\ar[drr]^{p_i} & &\\
 \mathcal{I}(\Sigma, \mathcal{P}) \ar@/_5pc/@{.>}[dd]_{t_{(\Sigma,\mathcal{P})}} \ar[u]^{\tau_{(\Sigma, \mathcal{P})}} \ar[r]^{i_\ast} \ar[d]_{\widetilde{e}_Y} &
                                          \mathcal{I}(\Sigma_{g,1})\ar[d]^{\widetilde{e}_X} \ar@{.>}[dr]^t \ar[r]^{\tau} & \bigwedge^3 H_1(\Sigma_{g,1}; \mathbb{Z})\ar[d]^C \\
\textnormal{Hom} (H_1^{\mathcal{P}}(\Sigma; \mathbb{Z}), \mathbb{Z}) \ar[r]^{i'_\ast} \ar[d]_{D^{-1}}                         
                          & H^1(\Sigma_{g,1}; \mathbb{Z}) \ar[r]^{D^{-1}} &    H_1(\Sigma_{g,1}; \mathbb{Z}) \ar[dll]^{\psi^{-1}_{\mathscr{K}}\circ r_{\ast}} \\
 H_1^{\mathcal{P}}(\Sigma; \mathbb{Z}) } 
 \label{eq:6}
\end{equation}

\end{corollary}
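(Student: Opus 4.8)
The plan is to realize diagram~(\ref{eq:6}) as a union of smaller cells, each of which is already known to commute, and then to read off the displayed identity by chasing an element $f\in\mathcal{I}(\Sigma,\mathcal{P})$ around the outside. Throughout, $\Sigma_{g,1}$ is taken with its trivial partition $\mathcal{P}'$ (it arises from a capping $\Sigma\hookrightarrow\Sigma_g$ by deleting a disc around the unique singularity of $X$, as in the Remark after Theorem~\ref{thm:natural}), so that $\mathcal{I}(\Sigma_{g,1},\mathcal{P}')=\mathcal{I}(\Sigma_{g,1})$, $H_1^{\mathcal{P}'}(\Sigma_{g,1};\mathbb{Z})=H_1(\Sigma_{g,1};\mathbb{Z})$, and $W_{(\Sigma_{g,1},\mathcal{P}')}=\bigwedge^3 H_1(\Sigma_{g,1};\mathbb{Z})$ by surjectivity of $\tau$.

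First I would dispose of the easy cells. The \emph{left square}, with vertices $\mathcal{I}(\Sigma,\mathcal{P})$, $\mathcal{I}(\Sigma_{g,1})$, $\textnormal{Hom}(H_1^{\mathcal{P}}(\Sigma;\mathbb{Z}),\mathbb{Z})$, $H^1(\Sigma_{g,1};\mathbb{Z})$ and arrows $i_\ast$, $\widetilde{e}_Y$, $\widetilde{e}_X$, $i'_\ast$, commutes by Theorem~\ref{thm:natural}. The \emph{upper triangle} with vertex $W_{(\Sigma,\mathcal{P})}$ commutes because Church's map $p_i$ is built so that $p_i\circ\tau_{(\Sigma,\mathcal{P})}=\tau\circ i_\ast$ (functoriality of the partitioned Johnson homomorphism, \cite{Church}, Definition~$5.13$). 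For the \emph{right square} together with its dotted diagonal $t$, note that on a smooth closed oriented curve $\gamma$ one has $\widetilde{e}_X(f)[\gamma]=\tfrac12\bigl(\widetilde{w}_X(f\gamma)-\widetilde{w}_X(\gamma)\bigr)=w_X(f\gamma)-w_X(\gamma)=e(f)[\gamma]$, so the restriction of $\widetilde{e}_X$ to $\mathcal{I}(\Sigma_{g,1})$ is Johnson's homomorphism $e$; hence $D^{-1}\circ\widetilde{e}_X=t$ by Johnson's definition of the Chillingworth class, while $t=C\circ\tau$ is the theorem of \cite{Johnson}. Finally, the cells containing the dotted arrow $t_{(\Sigma,\mathcal{P})}$ commute because $t_{(\Sigma,\mathcal{P})}=D^{-1}\circ\widetilde{e}_Y$ by definition (the duality stated just before Lemma~\ref{lem:commutativity}).

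The one cell still needing an argument is the \emph{lower left} one, with arrows $i'_\ast$, the two copies of $D^{-1}$, and the diagonal $\psi^{-1}_{\mathscr{K}}\circ r_\ast$. Here I would feed in Lemma~\ref{lem:commutativity}, whose content (diagram~(\ref{eq:4})) is $D^{-1}\circ i'_\ast=(s_\ast\circ\psi_{\mathscr{K}})\circ D^{-1}$. Post-composing with $\psi^{-1}_{\mathscr{K}}\circ r_\ast$ and using $r_\ast\circ s_\ast=\mathrm{id}$ — valid because, by Remark~\ref{rmk:homology} applied to the totally separated surface $\widehat{\Sigma}\hookrightarrow\Sigma_{g,1}$, the group $H_1^{\widehat{\mathcal{P}}}(\widehat{\Sigma};\mathbb{Z})$ is a direct summand of $H_1(\Sigma_{g,1};\mathbb{Z})$ with $s_\ast$ its inclusion and $r_\ast$ the projection — gives
\[
\psi^{-1}_{\mathscr{K}}\circ r_\ast\circ D^{-1}\circ i'_\ast=\psi^{-1}_{\mathscr{K}}\circ(r_\ast\circ s_\ast)\circ\psi_{\mathscr{K}}\circ D^{-1}=D^{-1}.
\]
Assembling the cells, for $f\in\mathcal{I}(\Sigma,\mathcal{P})$ we then compute
\begin{align*}
\psi^{-1}_{\mathscr{K}}\,r_\ast\,C\,p_i\,\tau_{(\Sigma,\mathcal{P})}(f)
&=\psi^{-1}_{\mathscr{K}}\,r_\ast\,C\,\tau\,i_\ast(f)
=\psi^{-1}_{\mathscr{K}}\,r_\ast\,D^{-1}\,\widetilde{e}_X\,i_\ast(f)\\
&=\psi^{-1}_{\mathscr{K}}\,r_\ast\,D^{-1}\,i'_\ast\,\widetilde{e}_Y(f)
=D^{-1}\,\widetilde{e}_Y(f)=t_{(\Sigma,\mathcal{P})}(f),
\end{align*}
which is the asserted equality; commutativity of all of~(\ref{eq:6}) is then the conjunction of the commutativities of its cells.

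I expect the main obstacle to be bookkeeping rather than anything conceptual. The delicate points are: confirming that Church's $p_i$ satisfies $p_i\circ\tau_{(\Sigma,\mathcal{P})}=\tau\circ i_\ast$ with the grading and sign conventions used here — in particular that the $\mathbb{Z}^k$-summand of $W_{(\Sigma,\mathcal{P})}$ is sent correctly into $\bigwedge^3 H_1(\Sigma_{g,1};\mathbb{Z})$; checking that the identifications $\mathcal{I}(\Sigma_{g,1},\mathcal{P}')=\mathcal{I}(\Sigma_{g,1})$ and $H_1^{\mathcal{P}'}(\Sigma_{g,1};\mathbb{Z})=H_1(\Sigma_{g,1};\mathbb{Z})$ are the ones under which $D$, $s_\ast$ and $r_\ast$ were set up in Section~\ref{section:symplecticbasis} and Remark~\ref{rmk:homology}; and tracking the factor $\tfrac12$ coming from the projective tangent bundle, which — as observed above — cancels exactly, so that $\widetilde{e}_X$ agrees with Johnson's $e$ on closed curves and the contraction map $C$ of \cite{Johnson} may be quoted verbatim.
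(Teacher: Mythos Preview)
Your proposal is correct and follows essentially the same approach as the paper: decompose diagram~(\ref{eq:6}) into cells and verify each one using Theorem~\ref{thm:natural}, the naturality of the partitioned Johnson homomorphism from \cite{Church}, Johnson's identity $t=C\circ\tau$, and Lemma~\ref{lem:commutativity}. The paper's proof is terser; in particular it simply asserts that the lower triangle follows from Lemma~\ref{lem:commutativity}, whereas you spell out the extra step --- post-composing the conclusion $D^{-1}\circ i'_\ast=(s_\ast\circ\psi_{\mathscr{K}})\circ D^{-1}$ with $\psi^{-1}_{\mathscr{K}}\circ r_\ast$ and invoking $r_\ast\circ s_\ast=\mathrm{id}$ --- which is a genuine clarification of how the lemma (whose bottom arrow points the other way) is being used.
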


\begin{proof} 
We need to confirm that each triangle and square is commutative. 
The partitioned Johnson homomorphism is natural \cite{Church}, Theorem $5.14$. 
Hence, the upper triangle is commutative. We showed in Theorem~\ref{thm:natural} 
that the left square in the middle part is commutative. The commutativity of the 
right square in the middle follows from Theorem $2$ of \cite{Johnson} and the definition 
of the Chillingworth class. Finally, the commutativity 
of the lower triangle follows from Lemma \ref{lem:commutativity}.
\end{proof}

\providecommand{\bysame}{\leavevmode\hbox
to3em{\hrulefill}\thinspace}
\providecommand{\MR}{\relax\ifhmode\unskip\space\fi MR }
 \MRhref  \MR
\providecommand{\MRhref}[2]{
  \href{http://www.ams.org/mathscinet-getitem?mr=#1}{#2}
 } \providecommand{\href}[2]{#2}

\bibliographystyle{abbrv}	

\bibliography{paperhatice}

\end{document}